
\documentclass[aos]{imsart}

\RequirePackage{amsthm,amsmath,amsfonts,amssymb}
\RequirePackage[authoryear]{natbib}
\usepackage{comment}

\usepackage[all,cmtip]{xy}
\usepackage{mathrsfs}
\usepackage{graphicx}
\usepackage{lscape}
\usepackage[nodayofweek,level]{datetime}
\usepackage[vlined,boxed,ruled]{algorithm2e}
\usepackage{algorithmicx}
\usepackage{algpseudocode}
\usepackage{hyperref}
\usepackage{cleveref}
\usepackage{enumitem}
\usepackage{caption}
\usepackage[subrefformat=parens,labelformat=parens]{subcaption}

\usepackage{booktabs}

\usepackage[x11names]{xcolor}

\startlocaldefs

\makeatletter
\newcommand *{\eqv}{\mathrel{\rlap{\raisebox{0.3ex}{$\m@th\cdot$ }}\raisebox {-0.3ex}{$\m@th\cdot$}}=}
\makeatother

\theoremstyle{plain}

\newtheorem{theorem}{Theorem}[section]

\newtheorem{proposition}[theorem]{Proposition}

\theoremstyle{remark}
\newtheorem{example}[theorem]{Example}
\newtheorem{definition}[theorem]{Definition}

\newtheorem{remark}[theorem]{Remark}
\numberwithin{equation}{section}
\usepackage{xcolor}

\newcommand{\GL}{\mathrm{GL}}

\newcommand{\SO}{\mathrm{SO}}

\newcommand{\SU}{\mathrm{SU}}

\newcommand{\tr}{\operatorname{tr}}
\newcommand{\Ad}{\operatorname{Ad}}
\newcommand{\ad}{\operatorname{ad}}
\newcommand{\End}{\operatorname{End}}
\newcommand{\trace}{\tr}

\newcommand{\dd}{\mathrm{d}}
\newcommand{\ee}{\mathrm{e}}
\newcommand{\ii}{\mathrm{i}}

\newcommand{\cb}{\mathbf c}
\newcommand{\phib}{\boldsymbol{\phi}}
\newcommand{\Eb}{\mathbf E}

\newcommand{\C}{\mathbb C}
\newcommand{\N}{\mathbb N}
\newcommand{\R}{\mathbb R}
\newcommand{\Z}{\mathbb Z}

\newcommand{\gf}{\mathfrak g}

\newcommand{\kf}{\mathfrak k}
\newcommand{\tf}{\mathfrak t}

\usepackage{mathtools}
\newcommand{\defeq}{\vcentcolon=}

\newcommand{\range}{\mathrm{range}}

\newcommand{\diag}{\operatorname{diag}}

\endlocaldefs

\begin{document}

\begin{frontmatter}
\title{Optimal Designs for Regression on Lie Groups}
\runtitle{Optimal Designs for Regression on Lie Groups}

\begin{aug}
\author[]{\fnms{Somnath}~\snm{Chakraborty}\ead[label=e1]{somnath.chakraborty@helsinki.fi}},
\author[]{\fnms{Holger}~\snm{Dette}\ead[label=e2]{holger.dette@rub.de}}, and
\author[]{\fnms{Martin}~\snm{Kroll}\ead[label=e3]{martin.kroll@uni-bayreuth.de}}

\address[]{Department of Mathematics and Statistics, 
University of Helsinki,  
FI-00560 Helsinki, Finland\printead[presep={,\ }]{e1}}
\address[]{Fakult\"{a}t f\"{u}r Mathematik, Ruhr-Universit\"{a}t Bochum, D-44801 Bochum, Germany\printead[presep={,\ }]{e2}}
\address[]{Fakult\"{a}t f\"{u}r Mathematik, Physik und Informatik, Universit\"{a}t Bayreuth, D-95440 Bayreuth, Germany\printead[presep={,\ }]{e3}}
\end{aug}

\begin{abstract}

We consider a linear regression model with complex-valued response and predictors from a compact and connected Lie group.
The regression model is formulated in terms of eigenfunctions of the Laplace-Beltrami operator on the Lie group.
We show that the normalized Haar measure is an \emph{approximate} optimal design with respect to all Kiefer's $\Phi_p$-criteria.
Inspired by the concept of $t$-designs in the field of algebraic combinatorics, we then consider so-called $\lambda$-designs in order to construct \emph{exact} $\Phi_p$-optimal designs for fixed sample sizes in the considered regression problem.
In particular, we explicitly construct $\Phi_p$-optimal designs for regression models with predictors in the Lie groups $\SU(2)$ and $\SO(3)$, the groups of $2\times 2$ unitary matrices and $3\times 3$ orthogonal matrices with determinant equal to $1$, respectively. 
We also discuss the advantages of the derived theoretical results in a concrete biological application.
\end{abstract}

\begin{keyword}[class=MSC]
\kwd[Primary ]{62K05}
\kwd{62R30}
\kwd[; secondary ]{22E30}
\end{keyword}

\begin{keyword}
\kwd{optimal design}
\kwd{Kiefer's optimality criteria}
\kwd{approximate design}
\kwd{Lie groups}
\kwd{Laplace-Beltrami}
\kwd{Haar measure}
\kwd{linear regression}
\kwd{Wigner's $D$-matrices}
\kwd{$\lambda$-design}
\kwd{spherical $t$-design}
\end{keyword}

\end{frontmatter}


\section{Introduction}\label{sec:intro} The statistical analysis of non-Euclidean data has been an active area of research for several decades now. 
Contributions include density estimation on manifolds \citep{Hendriks1990,Kerkyacharian2011,Hielscher2013}, spherical regression \citep{Chang1986,Downs2003} where both covariates and responses are spherical, testing \citep{Gine1975,Lacour2014}, and the estimation of extrinsic resp. intrinsic means \citep{Bhattacharya2003,Bhattacharya2005}, just to mention a few.
Non-parametric regression models with predictors lying on a manifold and real-valued responses are of particular importance since they naturally arise in a variety of applications in statistics and other empirical studies \citep{Lee1996,Lin2019,Jeon2022}.
For the special case of spherical predictors non-parametric methods like kernel type estimators \citep{DiMarzio2009,DiMarzio2014}, spline estimators \citep{Wahba1981,Alfeld1996}, and series estimators \citep{Narcowich2006,Monnier2011,Lin2019a,Abrial2008} have been intensively studied.

In this paper, we consider the problem of designing experiments for a regression problem where the covariate lies in a manifold other than a sphere (or a product of spheres). Our work is motivated by a  model for a molecular biological scoring function introduced in \citet{Padhorny2016}.
Scoring functions are meant to predict the strength of a specific receptor-ligand binding and are an important building block in docking methods where the interest is to determine interaction energies for large samples of potential receptor-ligand orientations.
Docking methods, in turn, are of fundamental importance in concrete applications like structure-based drug-design \citep{Kitchen2004,Guedes2018}.
The idea of \citet{Padhorny2016} is to consider a scoring function defined on the product manifold $\mathrm{SO}(3)/ \mathbb S^1 \times \mathrm{SO}(3) \simeq \mathbb S^2 \times \mathrm{SO}(3)$, where 
$\mathbb S^p \subset \R^{p+1}$ denotes the $p$-dimensional sphere and 
$ \mathrm{SO}(p)$ the group of $p \times p$ orthonormal matrices; the symbol $\simeq $
means that there exists an isomorphism between the manifolds.
More precisely, in their model the $\SO(3)$ part represents the space of the rotating ligand, whereas the $\SO(3)/ \mathbb S^1 \simeq \mathbb S^2$ part is the space spanned by the two Euler angles that define the orientation of the vector from the center of the fixed receptor toward the center of the ligand.
In contrast to alternative methods available, a major advantage of this modelling lies in its computational efficiency by means of the availability of fast Fourier transform (FFT) methods on the rotation group, as presented in \citet{Kostelec2008}.
The scoring function of \citet{Padhorny2016} is defined as a generalized Fourier series with a limited number of basis functions that are defined as tensor products of (slightly modified) Wigner D-functions.
\citet{Padhorny2016} also notice the fact that the equally spaced sampling of Euler angles considered by them leads to an inherently non-uniform distribution on the sampling domain and that this irregular sampling might distort subsequent analyses:

\begin{quotation}
\normalsize{This \normalfont{[scil. the non-uniform sampling]} becomes a significant problem if one seeks to obtain statistical information about the energy landscape of protein interaction [...]}
\end{quotation}

\citet{Padhorny2016} propose an ad hoc thinning procedure tailored to their specific problem to address the non-uniformity of their original sampling.
However, their remark directly motivates the question how (in some sense) optimal designs on the sampling domain should be constructed.

Besides the application from biology described above the construction of optimal designs on manifolds might equally be of interest in other scientific areas. As a further example let us mention the field of materials science where bases consisting of (symmetrized) basis functions have recently been introduced for the representation of various functions of interest \citep{Mason2008,Patala2012,Mason2019}.
In particular, the manifold $\SO(3)$ (the rotation group in dimension three) arises naturally in various applications from engineering and crystallography \citep{Chirikjian2021,Schaeben2007,Hielscher2013,Kovacs2003}.

The question of designing experiments for these and other applications of regression on manifolds is by no means trivial, and answers have only been given in special cases, which are from a practical point of view not completely satisfactory.  
For the case when the underlying manifold is a sphere, optimal designs for finite series estimators in terms of the so-called (hyper-)spherical harmonics are determined in \citet{Dette2005} and \citet{Dette2019} with respect to a broad class of optimality criteria.
Recently, \citet{LiCastillo2024} determined $D$-optimal designs for regression models on manifolds.
However, these optimal designs are \emph{approximate} designs in the sense of  \citet{Kiefer1974}, which means that they are probability measures with finite support and cannot directly be implemented for a fixed sample size. 
More precisely, a standard argument from optimal design theory using Carath{\'e}odory's theorem and the results in the cited papers show that for the regression models defined by a series of (hyper-)spherical harmonics there always exists an optimal approximate  design with 
finite support, say $\xi=\{ x_1 , \ldots, x_m\}$, and corresponding weights 
$\omega_1, \ldots, \omega_m >0$ such that $\sum_{i=1}^m\omega_i=1$. However, in most applications $m$ is a large integer and 
for a fixed sample size, say  $n$, the quantities $n \omega_i$ are usually  not necessarily integers. In fact, in most cases they have to be rounded to integers, say $n_i$, such that $\sum_{i=1}^m n_i = n$, and $n_i$ observations are taken at each $x_i$ ($i=1,\ldots,m$). 
As a consequence, the resulting design will not necessarily remain optimal in the class of all 
{\it exact designs for sample size $n$}, that is the class of all probability measures with finite support and masses which are multiples of $1/n$.

The present paper has two objectives. First, we take a more general point of view on the problem of  experimental design for regression models with predictors from a manifold and a complex-valued response, say $Y$. More precisely, we consider the situation, where 
the predictor is defined on a compact and connected Lie group $G$ and the regression function is given by a finite linear combination in terms of a well-defined set of orthonormal basis functions on the space $L^2(G)=L^2_{\mu_G}(G)$ of square-integrable complex-valued functions with respect to the probability Haar measure $\mu_{G}$ on $G$.
We show that the measure $\mu_G$ is in fact an optimal approximate design for estimating the coefficients by the least squares approach.
Secondly, and from a practical point of view more importantly, we consider the problem of constructing implementable \emph{exact} designs for fixed sample sizes from the Haar probability measure.
In particular, we show that there is an efficient way of explicitly constructing several implementable designs for selected finite sample sizes.
Our approach is based on the notion of 
$\lambda$-designs, which are generalizations of spherical $t$-designs (see, for instance, \citet{BANNAI20091392}, for a survey on spherical designs) to the case of compact Riemannian manifolds.
Roughly speaking, $\lambda$-designs define quadrature formulas for 
functions in $L^2_{\mu_{G}}(G)$ up to a given resolution level.
So far, apart from theoretical contributions (see, for example, \citet{Bondarenko2013, Bannai2022, Gariboldi2021} and the references therein), $\lambda$-designs have been used for applications in quantum computing (see, for example, \citet{Hastings2009, Sen2018}, and the references therein, for results concerning designs on unitary groups).

To the best of our knowledge, approaching statistical optimal design criteria via $\lambda$-designs on Lie groups has not been considered yet in the statistics literature.

The rest of this paper is organized as follows.
In Section~\ref{sec:regression} we introduce basic notions, the regression problem and briefly revisit the the basic concepts of optimal design theory.
In Section~\ref{sec:opt2} we prove optimality of the normalized Haar measure on the Lie group with respect to 
many optimality criteria including Kiefer's $\Phi_p$-criteria, see 
\citet{Kiefer1974}.
Section~\ref{sec:lambda} is devoted to the concept of $\lambda$-designs and the construction of exact optimal designs by means of $\lambda$-designs.
In Section~\ref{sec:examples} we consider concrete examples. In particular, we give examples of optimal designs on the special unitary group $\SU(2)$ and the special orthogonal group $\SO(3)$.
Moreover we also construct exact optimal designs for the regression 
model for a molecular biological scoring function 
considered in \citet{Padhorny2016}.
For this and several other examples we demonstrate the advantages of our approach, which yields a substantial reduction of the sample sizes (compared to alternative design strategies) without losing statistical accuracy.
Finally, in Appendix~\ref{app:lie} we gather notions and results on Lie groups that are essential for the understanding of the main part of the paper and provide more  details for some of the proofs of our results.

\section{Approximate designs for linear regression on Lie groups}
\label{sec:regression}

Let $G$ be a compact and connected Lie group and $\mu_G$ 
the uniquely defined (left- and right-) invariant probability Haar measure on $G$.

Since every compact and connected Lie group $G$ is also a compact Riemannian manifold, an orthonormal basis of $L^2(G)=L^2_{\mu_G}(G)$ is given by the eigenfunctions of the negative Laplace-Beltrami operator $-\Delta_G=-\mathrm{div}\circ\nabla$, which is an analogue of the usual Euclidean (negative) Laplace operator given by
\begin{equation*}
    -\Delta=
    -\sum_{i=1}^n \frac{\partial^2}{\partial x_i^2}    
\end{equation*}
on $\R^n$.
For a precise definition of the Laplace-Beltrami operator on a compact Riemannian manifold in terms of local coordinates we refer to \citet{Gine1975}, p. 103.
Essential for the present paper is the decomposition 
\begin{equation*}\label{eq:pw1}
    L^2(G)
    \cong \bigoplus_{j=0}^\infty \, E_{\lambda_j}
\end{equation*}
in terms of the finite-dimensional eigenspaces $E_{\lambda_j}$ corresponding to the increasingly ordered eigenvalues $0=\lambda_0<\lambda_1<\lambda_2 < \cdots \to \infty$ of $-\Delta_{G}$.
This sequence of eigenvalues is commonly called the spectrum of the Riemannian manifold, see \citet{Berger1971}.
Let $s_j \defeq \dim (E_{\lambda_j})< \infty$ denote the dimension of $E_{\lambda_j}$, equivalently, 
the multiplicity of the eigenvalue $\lambda_j$.
Note that $s_0=1$ holds for any compact and connected Riemannian manifold (see \citet{Berger1971}, Remarque A.I.5).
By fixing an orthonormal basis $\{ \phi_{j,k}, k = 1,2,\dots,s_j \}$ for 
each eigenspace $E_{\lambda_j}$, we obtain an orthonormal basis
\begin{equation*}
    \{ \phi_{j,k} \mid j = 0,\ldots,d, k = 1,2,\dots,s_j \}
\end{equation*}
for the direct sum $\bigoplus_{j=0}^d E_{\lambda_j}$, which is a finite-dimensional vector space of dimension $D \defeq s_0+\ldots+s_d$, containing ''band-limited'' functions up to a certain resolution level.

Let $g$ denote a $G$-valued covariate.
We assume that at potentially any $g \in G$ it is possible to observe a complex-valued response $Y$.
More precisely, for the complex-valued response we assume the linear regression model
\begin{equation}\label{eq:regress01}
    \Eb [Y|g] = \phib^\top(g) \boldsymbol{c}
\end{equation}
where
\begin{equation}\label{eq:def:phib}
    \phib^\top = (\phi_{0,1}, \phi_{1,1},\ldots,\phi_{0,s_1}, \ldots. \phi_{d,1},\ldots , \phi_{d,s_d})~,
\end{equation}
is the vector of linearly independent, complex-valued regression functions and $$
\boldsymbol{c} =  (c_{0,1}, c_{1,1},\ldots,c_{0,s_1}, \ldots. c_{d,1},\ldots , c_{d,s_d})^\top \in \C^D
$$
the unknown parameter vector (we refer to \citet{Miller1973} for a detailed discussion of  
complex-valued linear models).
We further assume that different observations are independent with equal variance, say $\sigma^2 >0$. 
Let $\mathscr P_{G}$ be the set of all Borel probability measures on $G$.
For any $\mu \in \mathscr P_{G}$, we define the \emph{information matrix}
\begin{equation}\label{eq:info}
    M(\mu) \defeq \int_{G} \phib
    (g) \phib^\ast(g)\dd \mu(g)
\end{equation}
(here and throughout this paper, the superscript $^\ast$ denotes complex conjugation and transposition of a matrix, that is $\phib^\ast(g) = \overline{\phib}(g)^\top$),
and denote with
\begin{equation} \label{det2}
    \mathscr M_{G}(\phib) \defeq
    \{M(\mu) : \mu\in 
    \mathscr P_{G} \}
\end{equation} 
the set of all information matrices for probability measures $\mu$.
Note that the information matrix $M(\mu)$ is a non-negative definite $D\times D$ matrix (which is equivalent to $M(\mu)$ being Hermitian with non-negative eigenvalues only). 
Moreover, $\mathscr M_{G}(\phib)$ is the image of the convex subset 
$\mathscr P_{G} $ under the linear map $\mu \mapsto M(\mu)$ and therefore also convex.

Let $s\leq D$, and consider a matrix $K \in \C^{D \times s}$ 
having full (column) rank $s$.
Assume that an experimenter takes $n_i$ independent observations at each design point $g_i$ from a finite set $\{ g_1,\ldots,g_m \}$.
This experimental design corresponds to the probability measure $\mu_n \defeq \sum_{i=1}^m \frac{n_i}{n}\,\delta_{g_i}$ with masses $\frac{n_1}{n}, \ldots , \frac{n_m}{n}$ at the points. ${g_1}, \ldots , g_m  \in G$, respectively, where $n=\sum_{i=1}^m n_i$ is the total number of measurements.
If the {\it range inclusion}  condition
\begin{equation}\label{eq:cond:range:inclusion}
    \range(K) \subseteq \range(M(\mu_n))
\end{equation}
holds, it is well known that the covariance matrix of the Gauss-Markov estimator of the vector $K^\ast \cb$ in model \eqref{eq:regress01} is given 
by the $ s \times s$ matrix 
\begin{equation}
    \label{det1}
  \frac{\sigma^2}{n} K^\ast (M(\mu_n))^{-}K
\end{equation}
where $A^-$ denotes a generalized inverse of the matrix $A$ (see, \citet{Pukelsheim2006}; note that in this reference only real-valued matrices are considered, but results can easily be transferred to the complex-valued case).
It can be shown that under the range inclusion condition \eqref{eq:cond:range:inclusion} the $s \times s$ matrix in \eqref{det1} does not depend on the specific choice of this generalized inverse and has full rank $s$.
An optimal design minimizes (resp. maximizes) a real-valued function of the matrix $K^\ast (M(\mu_n))^{-}K$ (resp. its inverse) leading to non-linear and discrete optimization problems which are unfortunately in general intractable.
A common approach is to relaxate such problems by optimizing the optimality functional on the convex set $\mathscr P_{G}$ of all probability measures on $G$.
Following this approach we define
\begin{equation*}
    \mathcal S_{K}(\phib)
    \defeq \{\mu\in \mathscr P_{G} : \range(K) \subseteq \range(M(\mu)) \}\,,
\end{equation*} 
as the set of all {\it feasible measures} in $\mathscr P_{G}$, and for any $\mu \in \mathcal S_{K}(\phib)$ we consider the matrix
\begin{equation}\label{eq:ck}
    C_{K}(\mu)
    \defeq (K^\ast M(\mu)^{-} K)^{-1}\,.
\end{equation} 
Note that the inverse in this definition exists because the matrix ${K}$ has full column rank and the rank inclusion condition is satisfied.
Moreover, $\mathcal S_{K}(\phib)$ is a compact and convex subset of $\mathscr P_{G}$ and $\mu_G \in \mathcal S_{K}(\phib)$. 
For any fixed $-\infty \leq p < 1$, we define Kiefer's $\Phi_p$-criteria \citep{Kiefer1974} for $-\infty \leq p < 1$ by  
\begin{equation*}\label{eq:via}
    \Phi_p(\mu) \defeq (\trace
    (C_{K}^p(\mu)
    ))^{1/p}, \quad \mu \in \mathcal S_{K}(\phib).
\end{equation*} 
For $p=-\infty$, this definition becomes $\Phi_{-\infty}(\mu) = \lambda_{\min} 
(C_{K}(\mu))$, the minimum eigenvalue of the matrix $C_{K}(\mu)$.
Note that $\Phi_p$ defines a non-negative real-valued function on $\mathcal S_{K}(\phib)$.
A probability measure $\mu_\ast \in \mathcal S_{K}
(\phib)$ is called (approximate) \emph{$\Phi_p$-optimal design} if it maximizes the functional $\Phi_p$ over $\mathcal S_{K}(\phib)$, that is, 
\begin{equation*}\label{eq:problem}
\Phi_p(\mu_\ast)=\max_{\mu\in \mathcal 
S_{K}(\phi)}\Phi_p(\mu)\,
\end{equation*} 
(it can be shown that the maximum exists, see \citet{Pukelsheim2006}).
Additionally, we define a further optimality criterion by 
\begin{equation*}\label{eq:opt2}
    \Phi_{E_s}(\mu)\eqv 
    \sum_{j=1}^{s}\kappa_j\,,
\end{equation*} 
where  
$\kappa_1 \leq \cdots \leq 
\kappa_{D}$ are the increasingly ordered eigenvalues of the positive definite matrix $M(\mu)$.
For any $s = 1,2,\ldots,D$, a probability measure 
$\mu_0\in \mathscr P_{G} (\phib)$ is called 
(approximate) \emph{$\Phi_{E_s}$-optimal design} if it maximizes $\Phi_{E_s}$ over the set $\mathscr P_{G}$, that is,   
\begin{equation*}\label{eq:some1}
\Phi_{E_s}(\mu_\ast)=\operatornamewithlimits{max}_{\mu\in 
        \mathscr P_{G}}
        \Phi_{E_s}(\mu)\,.
\end{equation*}
Optimal designs for these criteria have been investigated in \citet{Harman2004} and \citet{Filova2011}, among others. 

\section{Optimality of the normalized Haar measure}\label{sec:opt2}

Our principal interest is in designs that are particularly efficient for the estimation of the coefficients corresponding to complete blocks of eigenfunctions
\begin{equation*}
    \{ \phi_{j,1},\ldots,\phi_{j,s_j} \}
\end{equation*}
belonging to a specific eigenvalue $\lambda_j$ resp. a specific resolution level.
For this, we consider a tuple $(k_0,k_1,\ldots,k_{q})$ 
of integers satisfying 
\begin{equation*}
    0\leq k_0<k_1<\ldots<k_q\leq d\,,
\end{equation*} 
and for $s \defeq \sum_{i=0}^q s_{k_i}$ we consider the $s \times D$ block matrix 
$K^\ast =(K_{i,j})
_{i=0,\ldots,q}^{j =0,\ldots,d} \in \C^{s\times D}$ 
with blocks $K_{i,j}$ defined by 
\begin{align}\label{eq:k1}
    K_{i,j}=\begin{cases}
    0_{s_{k_i},s_{k_j}} & \text{if } j \neq k_i,\\ 
    I_{s_{k_i}} & \text{if } j = k_i,\\
    \end{cases}   
\end{align}
where $0_{a,b}$ denotes the $a \times b$ matrix with all entries equal to $0$ and $I_a$ the $a \times a$ identity matrix.
Note that $K \in \C^{D \times s}$ is defined such that $K^\ast \cb$ yields the vector with coefficients
\begin{equation*}
    \{ c_{k_i,j} \mid i=0,\ldots,q, j=1,\ldots,s_{k_i} \}
\end{equation*}
in model \eqref{eq:regress01}.
Two cases are particularly interesting. First, for $q=d$, $K$ becomes the $D \times D$ identity matrix and the full parameter vector $\cb$ is the target parameter.
On the other hand, if $q=0$, only the coefficients $c_{k_0,1},\ldots,c_{k_0,s_{k_0}}$ at a certain resolution level $k_0$ (corresponding to the eigenvalue $\lambda_{k_0}$) are considered.
\begin{theorem}\label{thm:optHaar}
Let $G$ be a compact and connected Lie group and for integers $0 \leq k_0 < \ldots < k_q \leq d$ consider the matrix $K$ defined in \eqref{eq:k1}.
Then, the following statements hold:
\begin{enumerate}[label=(\alph*)]
    \item\label{it:optimality:Haar:i} 
    For any $p \in [-\infty,1)$,
    the Haar measure $\mu_{G} $ is $\Phi_p$-optimal for estimating the linear combinations $K^* \boldsymbol{c}$ in the linear regression model \eqref{eq:regress01} with regression functions $\phib$ in \eqref{eq:def:phib}.
    \item\label{it:optimality:Haar:ii} For any $s=1,\ldots,D$, the Haar measure $\mu_{G} $ is $\Phi_{E_{s}}$-optimal for estimating the linear combinations $K^* \boldsymbol{c}$ in the linear regression model \eqref{eq:regress01} with regression functions $\phib$ in \eqref{eq:def:phib}.
\end{enumerate}
Moreover, the corresponding information matrix associated with the Haar measure $\mu_G$ in the considered regression model is given by $M(\mu_G)=I_D$.
\end{theorem}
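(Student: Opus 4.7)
The plan is to combine bi-invariance of $\mu_G$ with the unitary invariance and matrix concavity of Kiefer's criteria: for every $h\in G$ the information matrix $M((T_h)_\ast\mu)$ is related to $M(\mu)$ by a conjugation compatible with the block structure of $K$, so the spectrum of $C_K$ is preserved under right translation.

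The starting point is the direct computation $M(\mu_G)=I_D$, which follows because $\{\phi_{j,k}\}$ is by construction orthonormal in $L^2(\mu_G)$. In particular $\mu_G\in\mathcal S_K(\phib)$.

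Let $T_h : G\to G$, $T_h(g)=gh$. Then $M((T_h)_\ast\mu)$ is exactly the information matrix of $\mu$ computed in the right-translated basis $g\mapsto \phi_{j,k}(gh)$. Since the Laplace--Beltrami operator on $G$ commutes with right translations (for the bi-invariant Riemannian metric), each eigenspace $E_{\lambda_j}$ is stable under this translation; because $\mu_G$ is right-invariant, the translation is an isometry of $L^2(\mu_G)$, and the translated family is again an orthonormal basis of $\bigoplus_{j=0}^d E_{\lambda_j}$ respecting the eigenspace decomposition. The associated change-of-basis matrix $U(h)=\diag(U_0(h),\ldots,U_d(h))$ is therefore block-diagonal with unitary blocks $U_j(h)\in\C^{s_j\times s_j}$, and one verifies $M((T_h)_\ast\mu)=U(h)^\top M(\mu)\,\overline{U(h)}$; since $\overline{U(h)}=(U(h)^\top)^{-1}$ for unitary $U(h)$, this is an (ordinary) similarity, so $M((T_h)_\ast\mu)$ and $M(\mu)$ share their spectrum. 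The identity transfers to $C_K$: because the block structure of $K$ in \eqref{eq:k1} matches that of $U(h)$, the intertwining identities $U(h)^\top K=K\tilde U(h)^\top$ and $\overline{U(h)}K=K\overline{\tilde U(h)}$ hold with $\tilde U(h)=\diag(U_{k_0}(h),\ldots,U_{k_q}(h))$, and yield $C_K((T_h)_\ast\mu)=\tilde U(h)^\top C_K(\mu)\,(\tilde U(h)^\top)^{-1}$. Hence $\Phi_p((T_h)_\ast\mu)=\Phi_p(\mu)$ and $\Phi_{E_s}((T_h)_\ast\mu)=\Phi_{E_s}(\mu)$ for every $h\in G$.

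Finally, bi-invariance of $\mu_G$ combined with Fubini gives $\int_G (T_h)_\ast\mu\, d\mu_G(h)=\mu_G$ for every $\mu\in\mathscr P_G$, so $M(\mu_G)=\int_G M((T_h)_\ast\mu)\, d\mu_G(h)$. Invoking matrix concavity of $\Phi_p$ on the Loewner order for $p\in[-\infty,1)$ (see \citet{Pukelsheim2006}) and of $\Phi_{E_s}$ (the sum of the $s$ smallest eigenvalues is concave by Ky Fan's min representation), Jensen's inequality yields
\[
\Phi_p(\mu_G)\;\geq\;\int_G \Phi_p((T_h)_\ast\mu)\, d\mu_G(h)\;=\;\Phi_p(\mu),
\]
and analogously for $\Phi_{E_s}$, which is the claimed optimality. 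The step requiring the most care is the third paragraph: tracking transposition and conjugation conventions so that the translated basis really does produce an honest similarity on $M(\mu)$ (and then on $C_K(\mu)$). Once these bookkeeping choices are pinned down, the remainder reduces to Jensen's inequality together with the block-compatibility of $K$ with $U(h)$.
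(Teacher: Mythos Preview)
Your argument is correct and takes a genuinely different route from the paper's proof. The paper proceeds via the general equivalence theorem (Pukelsheim, \S7.20--7.22) and Harman's subgradient characterization: after computing $M(\mu_G)=I_D$, it verifies the pointwise inequality $\phib^\ast(g)KK^\ast\phib(g)\le s$ for every $g\in G$, and this step rests on the addition formula $\sum_{j=1}^{s_{k_i}}|\phi_{k_i,j}(g)|^2=s_{k_i}$ for Laplace--Beltrami eigenspaces on homogeneous manifolds (Gin\'e, Theorem~3.2). Your proof instead exploits the right-translation symmetry directly: the eigenspaces $E_{\lambda_j}$ are translation-stable, so $M((T_h)_\ast\mu)$ is a unitary conjugate of $M(\mu)$ by a block-diagonal $U(h)$ whose block structure is compatible with $K$, whence $C_K$ and hence $\Phi_p$, $\Phi_{E_s}$ are translation-invariant; averaging over $h$ and invoking concavity of $M\mapsto \phi_p(C_K(M))$ (which combines matrix concavity of $C_K$ with isotonicity and concavity of the information function $\phi_p$) and of $M\mapsto\sum_{j\le s}\kappa_j(M)$ then gives the result by Jensen. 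Your approach never touches the addition formula explicitly and shows that optimality of $\mu_G$ is an immediate consequence of bi-invariance plus concavity, so it extends verbatim to any concave, unitarily invariant optimality criterion. The paper's approach, on the other hand, yields equality in the equivalence-theorem condition, which is what is reused later (Theorem~\ref{thm:lambda:Phi}) to certify that any design with $M(\mu_S)=I_D$ is again $\Phi_p$- and $\Phi_{E_s}$-optimal. One small point of care in your write-up: spell out that the concavity you invoke is that of the composite $M\mapsto\phi_p(C_K(M))$ on the feasibility cone, not merely ``matrix concavity of $\Phi_p$'', and note that $(T_h)_\ast\mu\in\mathcal S_K(\phib)$ whenever $\mu\in\mathcal S_K(\phib)$ so that the Jensen step stays inside the domain of definition.
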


\begin{proof}
In order to prove  part \ref{it:optimality:Haar:i}, first note that the definition of the information matrix $M(\mu)$ in \eqref{eq:info} together with the orthonormality of the basis functions in $\phib$ with respect to the Haar measure directly implies that $M(\mu_{G}) = I_{D}$.
In order to establish optimality of the Haar measure, we use the general equivalence theorem in Section~7.20 of \citet{Pukelsheim2006}.
For the case $p \in (-\infty,1)$, this result implies that the Haar probability measure $\mu_{G}$ is $\Phi_p$-optimal if and only if the inequality
 \begin{equation}\label{eq:equiv}
    \phib^\ast(g) M(\mu_{G})^- K C_{K}(\mu_{G})^{p+1} K^\ast (M(\mu_{G})^-)^\ast \phib(g) \leq \trace (C_{K}^p(\mu_{G})) 
\end{equation} 
holds for all $g \in G$, where the matrix $C_{K}(\mu_G)$ is defined in \eqref{eq:ck}.
Since $M(\mu_{G}) = I_{D}$, we have $C_{K}(\mu_{G}) = (K^\ast K)^{-1} =I_s$, and using the definition of the matrix $K$ in \eqref{eq:k1}, the right-hand side of~\eqref{eq:equiv} becomes
\begin{equation*}
  \trace(C^p_{K}(\mu_{G})) = \trace ((K^\ast K)^{-p}) = \trace({I}_s) = s.
\end{equation*} 
On the other hand, using $K^\ast K = I_s$, the left-hand side of \eqref{eq:equiv} is equal to
\begin{align*}
    \phib^\ast(g)  K (K^\ast K)^{-p-1} K^\ast \phib(g) &= \phib^\ast(g)  K  K^\ast \phib(g)= \sum_{i=0}^q \sum_{j=1}^{s_{k_i}} \lvert \phi_{k_i,j}(g) \rvert^2 = \sum_{i=0}^q s_{k_i}= s,
\end{align*}
where we used the addition formula $\sum_{j=1}^{s_{k_i}} \lvert \phi_{k_i,j}(g) \rvert^2 = s_{k_i}$ for a complete set of eigenfunctions of the eigenspace $E_{\lambda_{k_i}}$ of the Laplace-Beltrami operator (see \citet{Gine1975}, Theorem 3.2).
This finishes the proof of \ref{it:optimality:Haar:i} for the case $p > - \infty$.

Next we consider the case  $p=- \infty$, where Theorem 7.22 in \citet{Pukelsheim2006} shows that the design $\mu_G$ is $\Phi_{-\infty}$-optimal if and only if there exists a matrix $E$ with $\trace(E)=1$ and a generalized inverse $M^-$ of  the matrix $M (\mu_G)$ such that the inequality 
\begin{align}
\label{det7}
 \phib^\ast(g) M^- K C_{K}(\mu_{G}) E C_{K}(\mu_{G}) 
 K^\ast (M{^-})^\ast \phib(g) \leq \lambda_{\min}  (C_{K}(\mu_{G})) 
\end{align}
holds for all $g \in G$.
Now, for the design $\mu_G$ we have  $C_{K}(\mu_{G}) =I_s$,  $M(\mu_{G}) = I_{D}$, which 
gives $\lambda_{\min}  (C_{K}(\mu_{G})) = 1$ and  $M^-=I_{D}$. Therefore, if we can choose $E$ as the identity matrix, the inequality \eqref{det7} reduces to $\phib^\ast(g)  K  K^\ast \phib(g) \leq 1 $.  By the discussion of the case $p > - \infty $ this is in fact an identity, which proves the statement for the case $p=-\infty$.

In order to prove part \ref{it:optimality:Haar:ii}, we first note that the subgradient 
$\partial \Phi_{E_s}$ of 
$\Phi_{E_s}$ at 
$M(\mu_{G})=I_{D}$ is  given by  
\begin{equation*}
    \partial \Phi_{E_s}(\mu_{G})
    = \{ \diag(\gamma_1,\ldots,\gamma_{D}) \mid 
    \gamma_1,\ldots,\gamma_D \in [0,1],~
    \gamma_1+\ldots+\gamma_{D} = s\}\,,
\end{equation*} where $\diag
(\gamma_1,\ldots,\gamma_{D})$ denotes the $D \times D$ 
diagonal matrix with diagonal $(\gamma_1,\ldots,\gamma_{D})$.
By Theorem~4 in \citet{Harman2004}, the measure $\mu_{G}$ is 
$\Phi_{E_s}$-optimal if and only if there exists a $\Gamma \in \partial \Phi_{E_s}(\mu_{G})$ such that the inequality 
\begin{equation*}
    \phib(g)^\ast \Gamma \phib(g)
    \leq \Phi_{E_s}(\mu_{G})
\end{equation*} 
holds for all $g \in G$.
We consider $\Gamma=K K^\ast$ where the matrix $K$ is defined in \eqref{eq:k1}.  This matrix is an element of the subgradient
because it is diagonal with entries 
in $[0,1]$, and
\begin{equation*}
    \trace(\Gamma)=\trace(KK^\ast)=\trace(K^\ast K)
    =\trace(I_s)=s\,.
\end{equation*} 
Moreover, as seen in the proof of \ref{it:optimality:Haar:i}, one has
\begin{align*}
    \phib^\ast(g) \Gamma \phib(g) =\phib^\ast(g) K
    K^\ast \phib(g) = s =\Phi_{E_s}
    (\mu_{G}) 
\end{align*}
for all $g\in G$, which proves \ref{it:optimality:Haar:ii}.
\end{proof}

\begin{remark}
Both assertions of Theorem~\ref{thm:optHaar} hold true for any  compact homogeneous Riemannian manifold and not only for the special case of compact and connected Lie groups.
In fact, the assumptions on the Lie group $G$ made 
in this paper imply that $G$ is a compact homogeneous Riemannian manifold.
In this general case, the Haar measure $\mu_G$ on $G$ has to be replaced with the normalized volume element associated with the metric tensor $g$ of the manifold.
\end{remark}

\section{$\lambda$-designs, $\Phi_p$- and $\Phi_{E_s}$-optimality}
\label{sec:lambda}

In Theorem~\ref{thm:optHaar} we have identified the Haar measure of a compact and connected Lie group as an optimal approximate design with respect to both the $\Phi_p$-
and the $\Phi_{E_s}$-optimality criteria.
However, this measure cannot be implemented in practice to define a design of a given finite sample size $n$.
As the space $\mathscr M_{G}(\phib)$ in \eqref{det2} is convex 
and compact, it follows from Carath{\'e}odory's theorem that there is always a design $\mu_{\text{finite}}$
with weights $w_1, \ldots , w_{s^\star}$ at pairwise distinct points $g_1,\ldots,g_{s^\star}$, where
\begin{equation}\label{eq:bounds:s:star}
    s \leq s^\star \leq {1 \over 2} s (s+1) + s(D -s)
\end{equation}
such that $ M(\mu_{\text{finite}}) = M(\mu_{G})$
(see \citet{Pukelsheim2006}, Section~8.3).
Thus, if an overall budget of $n$ observations is allowed, one can round off the quantities $nw_i$ to integers, say $n_i$, such that $\sum_{i=1}^{s^\star} n_i =n$, as in \citet{Pukelsheim1992}. Taking $n_1, \ldots , n_{s^\star}$ observations. at the points $g_1, \ldots , g_{s^\star}$, respectively,  then 
yields an implementable design for the 
model \eqref{eq:regress01}.

However, the points $g_i$ and weights $w_i$ have to be found numerically by solving a non-linear optimization problem, that is maximizing the optimality criterion in the class of all designs with at most ${1 \over 2} s (s+1) + s(D -s)$ support points.
For the regression models considered in this paper this number  will be very large.
For example, if all components of the vector 
$\boldsymbol{c}$  are to be estimated, we have $K=I_D$, \eqref{eq:bounds:s:star} reduces to $D \leq s^\star \leq {1 \over 2} D(D+1)$, and  usually $D$ is very large as we will illustrate by the examples below. In such cases, the
resulting  optimization problem is non-linear, high-dimensional and computationally hard to solve (although it is convex). Moreover, the  number $s^*$ of support points of the determined optimal design will be large as well in these cases.
As a consequence, for moderate sample sizes $n$ many weights will be very small such that it is difficult to round $nw_{i}$ to a positive integer. Therefore, the exact designs obtained by this approach are often inefficient.

\begin{example}\label{example:sstar:sphere}
For the case of the sphere $\mathbb S^2$ (which is a homogeneous manifold, but not itself a Lie group), optimal designs for the regression problem \eqref{eq:regress01}, formulated in terms of spherical harmonics as basis functions, have been studied in \citet{Dette2005}.
The spherical harmonics (usually denoted by $Y_{\ell,m}$) form a complete set of eigenfunctions of the Laplace-Beltrami operator for the eigenvalues $\lambda_\ell = \ell(\ell+1)$, $\ell = 0,1,\ldots$ (the stated eigenvalues are valid for the non-normalized sphere of volume $4\pi$).
The eigenspace corresponding to the eigenvalue $\lambda_\ell$ has dimension $s_\ell = 2 \ell+1$, and considering a regression model given by \eqref{eq:regress01} and \eqref{eq:def:phib} in terms of all the spherical harmonics up to the $(d+1)$-st eigenvalue yields overall dimension $D = \sum_{\ell=0}^d (2\ell+1) = (d+1)^2$.
Consequently, for the case $K=I_D$, the general bound becomes
\begin{equation*}
 d^2 \approx (d+1)^2 \leq s^\star \leq \frac{(d+1)^4+(d+1)^2}{2}  \approx {d^4  \over 2 }
\end{equation*}
\end{example}

\begin{example}\label{example:sstar:SO3}
For the case of the rotation group $\SO(3)$, which is a non-abelian Lie group, the basis of eigenfunctions of the Laplace-Beltrami operator is usually given by the so-called
\emph{Wigner~$D$-matrices} $D^\ell_{m,m'}$ where $\ell \in \N_0$ and $m,m' = -\ell,\ldots,\ell$.
The precise definition of the $D^\ell_{m,m'}$ is given in Section~\ref{subsec:example:SO3} where a regression model in terms of these eigenfunctions is considered in detail.
The eigenfunctions $D^\ell_{m,m'}$ form an orthonormal basis for the eigenspace corresponding to the $(\ell+1)$-smallest eigenvalue $\lambda_\ell\propto \ell(\ell+2)$.
Hence, this eigenspace has dimension $(2\ell+1)^2$.
Considering a regression model given by \eqref{eq:regress01} and \eqref{eq:def:phib} in terms of all the Wigner $D$-matrices up to the $(d+1)$-st eigenvalue yields overall dimension $D = \sum_{\ell=0}^d (2\ell+1)^2 = (2d+1)(2d+2)(2d+3)/6$.
For the case $K=I_D$, the general bound becomes
\begin{align*}
 &   {4d^3  \over 3 } \approx 
     {(2d+1)(2d+2)(2d+3) \over 6  } \leq s^\star  \\ 
     &  ~~~~~~~~~~~~~~~~~~~~~~~~~~~~~~~~
     \leq \frac{(2d+1)(2d+2)(2d+3) [(2d+1)(2d+2)(2d+3) + 6]}{72}  \approx {8d^6  \over 9 }
     \end{align*}
\end{example}

\begin{example}\label{example:sstar:padhorny}
Let us now consider the product manifold 
$$\mathrm{SO}(3)/ \mathbb S^1 \times \mathrm{SO}(3) \simeq \mathbb S^2 \times \mathrm{SO}(3)$$
considered in \citet{Padhorny2016}.
The natural way to construct a design on this Cartesian product is to take a product design of two designs on the components $\mathbb S^2$ and $\SO(3)$.
The respective values of $s^\star$ for designs on these two manifolds have been constructed in Examples~\ref{example:sstar:sphere} and \ref{example:sstar:SO3}, respectively.
The specialization of the general bounds for $s^\star$ is obtained by multiplying the lower resp. upper bounds from Examples~\ref{example:sstar:sphere} and \ref{example:sstar:SO3}.
Hence, the lower bound on the value of $s^\star$ is
\begin{equation*}
     (d+1)^2 \cdot {(2d+1)(2d+2)(2d+3) \over 6} \approx {4d^5  \over 3 } \,,
\end{equation*}
and the upper bound becomes
\begin{equation*}
    \frac{(d+1)^4+(d+1)^2}{2} \cdot \frac{(2d+1)(2d+2)(2d+3) [(2d+1)(2d+2)(2d+3) + 6]}{72}
    \approx {4d^{10}  \over 9 }\,.
\end{equation*}
\end{example}

In this section we provide an alternative approach for the construction of implementable designs.
For this purpose we consider the concept of $\lambda$-designs (see \citet{Gariboldi2021} for a recent reference in the context of Riemannian manifolds using a slightly different definition).
$\lambda$-designs are generalizations of spherical $t$-designs which have found considerable attention in the literature 
\citep{Delsarte1977,BANNAI20091392,Bondarenko2013}.
Such designs are of interest as they provide quadrature formulas for numerical integration (see, for instance, \citet{Korevaar1994} for more details).

\begin{definition} 
Let $E_{\lambda_j}$ be the eigenspace corresponding to the $j$-th smallest eigenvalue $\lambda_j$ of the Laplace-Beltrami operator.
For any $\lambda>0$, a finite non-empty subset $\operatorname{S}
\subset G$ is called a \emph{$\lambda$-design} if for every 
\begin{equation*}
    f \in \bigoplus_{\lambda_j
        \leq\lambda} E_{\lambda_j}
\end{equation*} 
the following quadrature formula holds: 
\begin{equation}\label{eq:design1}
    \int_{G}f(g) \dd \mu_{G}(g)
    = \frac 1{\lvert S \rvert}
    \sum_{g\in S}
    f(g)\,.
\end{equation} 
\end{definition} 

The following proposition provides an alternative characterization of $\lambda$-designs.

\begin{proposition}\label{prop:characterisation:lambda:design}
A finite non-empty subset $S \subset G$ is a $\lambda$-design if and only if the equality 
\begin{align}\label{eq:root}
    \frac 1{\lvert S \rvert}
    \sum_{g\in S} f(g) =0
\end{align}
holds for all $f \in \bigoplus_{0<\lambda_j \leq\lambda} E_{\lambda_j}$.   
\end{proposition}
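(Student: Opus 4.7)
The plan is to exploit the fact that for a compact connected Riemannian manifold one has $s_0 = \dim E_{\lambda_0} = 1$, so $E_{\lambda_0}$ is precisely the one-dimensional space of constant functions, and moreover $\int_G f \, \dd\mu_G = 0$ for any $f$ lying in an eigenspace $E_{\lambda_j}$ with $\lambda_j > 0$ (by orthogonality of distinct eigenspaces of the self-adjoint operator $-\Delta_G$, together with the fact that $\int_G f \, \dd\mu_G = \langle f, \mathbf{1} \rangle_{L^2(G)}$).

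For the forward direction, I would assume $S$ is a $\lambda$-design and pick an arbitrary $f \in \bigoplus_{0 < \lambda_j \leq \lambda} E_{\lambda_j}$. Since this is a subspace of $\bigoplus_{\lambda_j \leq \lambda} E_{\lambda_j}$, the defining quadrature identity \eqref{eq:design1} applies, yielding
\begin{equation*}
\frac{1}{\lvert S \rvert}\sum_{g\in S} f(g) = \int_G f(g)\,\dd\mu_G(g) = 0,
\end{equation*}
where the final equality uses orthogonality of $f$ to the constants.

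For the converse, I would assume \eqref{eq:root} and take an arbitrary $f \in \bigoplus_{\lambda_j \leq \lambda} E_{\lambda_j}$. Using the orthogonal decomposition I would write $f = c + f_1$, where $c \in E_{\lambda_0}$ is a constant function and $f_1 \in \bigoplus_{0 < \lambda_j \leq \lambda} E_{\lambda_j}$. Then the sum splits additively: the constant part satisfies $\frac{1}{\lvert S \rvert}\sum_{g \in S} c = c = \int_G c \, \dd\mu_G$ trivially, while the non-constant part contributes $0$ to the sum by assumption and also integrates to $0$ against $\mu_G$. Adding the two pieces recovers \eqref{eq:design1}.

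There is no real obstacle here; the only conceptual point that needs to be invoked cleanly is that the zeroth eigenspace consists exactly of constants (so that $\bigoplus_{\lambda_j \leq \lambda} E_{\lambda_j}$ decomposes as constants plus the subspace appearing in the statement) and that higher eigenfunctions integrate to zero. Both facts are standard for the Laplace–Beltrami operator on a compact connected Riemannian manifold and were already used implicitly in the preceding sections.
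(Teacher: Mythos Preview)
Your proposal is correct and follows essentially the same argument as the paper's proof: both directions rely on the fact that $E_{\lambda_0}$ consists of the constants, so that any $f$ splits as a constant plus a function orthogonal to constants, and the quadrature identity is trivial on the constant part while the non-constant part has vanishing integral. The only cosmetic difference is that the paper explicitly reduces to individual eigenfunctions by linearity, whereas you work directly with elements of the direct sum.
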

    
\begin{proof}
Note that $\lambda_0=0$ so that 
$E_{\lambda_0}$ consists of the constant functions on $G$, for which identity \eqref{eq:design1} obviously holds.
Moreover, by linearity, it is sufficient to consider \eqref{eq:root} for eigenfunctions $\phi$ in some eigenspace $E_{\lambda_j}$ only.
First, assume that \eqref{eq:design1} holds and let $\phi \in E_{\lambda_j}$ for some $0 < \lambda_j \leq \lambda$.
Then,
\begin{align*}
    \frac 1{\lvert S \rvert} \sum_{g\in S}\phi(g) = \int_{G}\phi(g) \dd \mu_{G}(g) = \int_{G} (\phi(g) \cdot 1) \dd \mu_{G}(g) = 0
\end{align*}
by the orthogonality of eigenspaces, which yields \eqref{eq:root}.
Vice versa, assuming that \eqref{eq:root} holds, we write an arbitrary $f \in \bigoplus_{\lambda_j \leq\lambda} E_{\lambda_j}$ as $f=f_1 + f_2$ for some constant function $f_1$ and $f_2 \in \bigoplus_{0<\lambda_j \leq\lambda} E_{\lambda_j}$.
Then,
\begin{align*}
    \int_{G}f(g) \dd \mu_{G}(g) = \int_{G}f_1(g) \dd \mu_{G}(g) = 
    { 1 \over |S]} \sum_{g\in S} f_1(g) = { 1 \over |S]} \sum_{g\in S} f(g)
\end{align*}
where \eqref{eq:root} was used to justify the last step.
Hence, \eqref{eq:design1} holds which finishes the proof.
\end{proof}

At first sight, the notions of $\Phi_p$-, $\Phi_{E_s}$-optimality and $\lambda$-designs seem to be unrelated.
However, as we will prove in the following Theorem~\ref{thm:lambda:Phi}, $\lambda$-designs are $\Phi_p$- and $\Phi_{E_s}$-optimal. 

\begin{theorem}\label{thm:lambda:Phi}
    Let $G$ be a compact and connected Lie group.
    Consider the regression model defined by \eqref{eq:regress01} and \eqref{eq:def:phib}, that is, the linear regression model is defined in terms of a basis
    of $D$ orthonormal eigenfunctions corresponding to the first $d+1$ eigenvalues $\lambda_0,\ldots,\lambda_d$ of the negative Laplace-Beltrami operator $-\Delta_{G}$.
    Suppose that the matrix $K$ is defined as in \eqref{eq:k1}.
    If $\lambda_d$ is sufficiently large, then for
    every $14\lambda_d$-design $S \subset G$, the empirical measure
    \begin{equation*}
    \mu_{S} \defeq \frac 1{\lvert S \rvert} \sum_{g \in S} \delta_{g}
\end{equation*}
is $\Phi_p$-optimal  (for every $-\infty \leq p < 1$) and $\Phi_{E_s}$-optimal (for every $s = 1,2,\ldots,D$)
for estimating the linear combinations $K^\ast \cb$.
\end{theorem}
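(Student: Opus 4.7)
The plan is to reduce the theorem to Theorem~\ref{thm:optHaar} by establishing the identity $M(\mu_S) = I_D = M(\mu_G)$. Since Kiefer's $\Phi_p$- and $\Phi_{E_s}$-criteria depend on the design $\mu$ only through the information matrix $M(\mu)$, this equality immediately transfers the optimality of $\mu_G$ proved in Theorem~\ref{thm:optHaar} to the empirical measure $\mu_S$.

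For this reduction I would write the $((j,k),(j',k'))$-entry of $M(\mu_S)$ as
\begin{equation*}
    \frac{1}{\lvert S \rvert}\sum_{g \in S} \phi_{j,k}(g)\,\overline{\phi_{j',k'}(g)},
\end{equation*}
whereas the corresponding entry of $M(\mu_G)$ is the Haar integral of the same product, which equals $\delta_{(j,k),(j',k')}$ by orthonormality of the eigenbasis. The theorem thus reduces to the claim that the quadrature identity \eqref{eq:design1} holds for each product $h \defeq \phi_{j,k}\overline{\phi_{j',k'}}$ with $j,j' \leq d$; by Proposition~\ref{prop:characterisation:lambda:design}, this is guaranteed as soon as every such $h$ lies in $\bigoplus_{\lambda_l \leq 14\lambda_d} E_{\lambda_l}$.

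The technical core of the proof is therefore the band-limitedness claim: any product of two eigenfunctions of $-\Delta_G$ whose eigenvalues are at most $\lambda_d$ is itself supported, in its Peter-Weyl expansion, on eigenspaces with eigenvalue at most $14\lambda_d$. I would establish this via the Peter-Weyl theorem, which identifies each $E_{\lambda_j}$ as an orthogonal sum of spaces of matrix coefficients of irreducible unitary representations of $G$ whose Casimir eigenvalue equals $\lambda_j$. A pointwise product of matrix coefficients from irreducibles $V_\mu$ and $V_{\mu'}$ (with the complex conjugation sending $V_{\mu'}$ to its dual $V_{\mu'}^\ast$) is a matrix coefficient of $V_\mu \otimes V_{\mu'}^\ast$, whose irreducible constituents have highest weights dominated by $\mu + (-w_0 \mu')$, where $w_0$ is the longest element of the Weyl group. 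Bounding the Casimir eigenvalue $\|\nu + \rho\|^2 - \|\rho\|^2$ of each constituent $V_\nu$ in terms of the input eigenvalues $\|\mu\|^2 + 2\langle \mu,\rho\rangle$ and $\|\mu'\|^2 + 2\langle \mu',\rho\rangle$ then delivers the desired band-limit.

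The main obstacle is making the constant explicit, uniformly across all compact connected Lie groups $G$ and all admissible pairs $\mu,\mu'$, and in particular verifying that $14$ suffices. This is precisely where the hypothesis that $\lambda_d$ is \emph{sufficiently large} enters the argument: it is needed to absorb the lower-order cross terms involving the Weyl vector $\rho$ into the principal quadratic terms in $\mu$ and $\mu'$, so that a single group-independent constant can be chosen. Once this band-limited bound is in place, combining Proposition~\ref{prop:characterisation:lambda:design}, the resulting identity $M(\mu_S) = I_D$, and Theorem~\ref{thm:optHaar} applied to the matrix $K$ concludes the proof for all criteria simultaneously.
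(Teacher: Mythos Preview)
Your proposal is correct and follows essentially the same route as the paper: reduce to showing $M(\mu_S)=I_D$ via the quadrature property, establish the band-limitedness of products $\phi_{j,k}\overline{\phi_{j',k'}}$ using highest-weight/Casimir-eigenvalue bounds on tensor products (with the ``$\lambda_d$ sufficiently large'' hypothesis absorbing the $\rho$-terms to obtain the constant $14$), and then invoke the arguments of Theorem~\ref{thm:optHaar}. Your explicit handling of complex conjugation via the dual representation $V_{\mu'}^\ast$ (with highest weight $-w_0\mu'$) is a small refinement the paper leaves implicit, but otherwise the arguments coincide.
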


\begin{proof}
As the proof of this theorem is 
slightly technical, we present a skeletal proof here, and 
refer the reader to \cref{app:proof:details} for 
supplementing details.
For $g \in G$, set
\begin{equation*}
    \phib^\top(g) = (\phi_{0,1}(g),\phi_{1,1}(g),\ldots,\phi_{1,s_1}(g),\ldots,\phi_{d,1}(g),\ldots,\phi_{d,s_d}(g)),
\end{equation*}
where for any fixed $j$ the functions $\phi_{j,1},\ldots,\phi_{j,s_j}$ form  a complete set of orthonormal eigenfunctions of the Laplace-Beltrami operator associated with the eigenvalue $\lambda_j$.
Let $S \subset G$ be an arbitrary $14\lambda_d$-design.
It will be sufficient to show that the information matrix $M(\mu_S)$ associated with $\mu_S$ is equal to the identity matrix $I_D$, which implies both $\Phi_p$- and $\Phi_{E_s}$-optimality using  the same arguments as given in the proof of Theorem~\ref{thm:optHaar}.
The matrix 
$M(\mu_S)$ has entries
\begin{equation*}
    (M(\mu_S))_{k,\ell} =\frac 1{\lvert S \rvert} \sum_{g \in S} (\phib(g) \phib^\ast(g))_{k,\ell}\,.
\end{equation*}
In particular, for $k \neq \ell$, it follows from Equation~\eqref{eq:product:nu:neq:mu} in  Appendix~\ref{app:proof:details}  that the entry $(\phib(\cdot) \phib^\ast(\cdot))_{k,\ell}$, which is the product of a eigenfunction with the complex conjugate of a different eigenfunction, belongs to the space $\bigoplus_{0 < \lambda \leq 14 \lambda_d} E_\lambda$ provided  that $\lambda_d$ is sufficiently large (for the rigorous meaning of \emph{sufficiently large} we refer to Appendix~\ref{app:proof:details}). Using the characterization of a $14\lambda_d$-design in  Proposition~\ref{prop:characterisation:lambda:design} this implies that
\begin{equation}\label{eq:M:0}
    (M(\mu_S))_{k,\ell} = \frac 1{\lvert S \rvert}
    \sum_{g \in S} (\phib(g) \phib^\ast(g))_{k,\ell}=0 \,.
\end{equation}
Similarly, again assuming that $\lambda_d$ is sufficiently large, Equation~\eqref{eq:product:nu:eq:mu} from Appendix~\ref{app:proof:details} implies that $(\phib(\cdot) \phib^\ast(\cdot))_{k,k}$, which is now the product of an eigenfunction with its complex conjugate, can be written in the form $1+\phi$ with $\phi = \bigoplus_{0 < \lambda \leq 14 \lambda_d} E_\lambda$. 
 Using again the characterization of a $14\lambda_d$-design in Proposition~\ref{prop:characterisation:lambda:design},  this yields  
\begin{equation}\label{eq:M:1}
    (M(\mu_S))_{k,k} = 1
\end{equation}
Thus, combining \eqref{eq:M:0} and \eqref{eq:M:1} implies $M(\nu)=I_D$, which finishes the proof of the theorem.
The remaining details regarding Equations~\eqref{eq:product:nu:neq:mu} and \eqref{eq:product:nu:eq:mu} are given in Appendix~\ref{app:proof:details}.
\end{proof}

\begin{remark}
Theorem~\ref{thm:lambda:Phi} relates the $\Phi_p$- and $\Phi_{E_s}$-optimal design problem to the problem of finding  $\lambda$-designs.
The existence of these designs, and hence $\Phi_{E_s}$- and $\Phi_p$-optimal designs, is guaranteed by \citet{Gariboldi2021}, Theorem~6, which implies that for any integer $N$ satisfying
\begin{equation*}\label{eq:N}
N \geq C_G \lambda^{\dim G}
\end{equation*}
with $C_G$ being a suitable constant (that depends only on the Lie group $G$) there exists a $14\lambda$-design on $G$ of cardinality $N$.
In Section~\ref{sec:examples} we will give examples of $\Phi_p$- and $\Phi_{E_s}$-optimal designs for small sample sizes.
\end{remark}

\begin{remark}
    Note that, in contrast to the method of construction mentioned at the beginning of this section, the empirical measure $\mu_S$ has equal weights.
    Hence, $\mu_S$ is directly implementable as an exact  design of cardinality $\lvert S\rvert$ which is $\Phi_p$-optimal.
    In particular, no loss in efficiency due to an additional rounding procedure occurs.
    The examples in Section~\ref{sec:examples} will show that for classical groups like $\SU(2)$ and $\SO(3)$ one does not even have to construct a $14\lambda_d$-design (the numerical constant $14$ being an artefact of our method of proof which holds for any compact and connected Lie group) if $\lambda_d$ is the maximal frequency in the considered regression model.
    Hence, at least for the special cases considered there, $S$ can be chosen as a $\lambda$-design for some $\lambda < 14\lambda_d$ potentially resulting in an even smaller optimal design which provides a further improvement in comparison to the general theory.
    The arguments for this improvement are related to the classical Clebsch-Gordan formulae for $\SU(2)$ and $\SO(3)$, respectively, which describe the irreducible components of the tensor product of two irreducible representations.
    \end{remark}

\begin{remark}\label{rem:rho}
    The assumption of Theorem~\ref{thm:lambda:Phi} that the $d+1$-st smallest eigenvalue $\lambda_d$ is sufficiently large is satisfied if the maximal resolution level in the regression model is chosen large enough because the sequence of eigenvalues of the Laplace-Beltrami operators tends to infinity.
    Moreover, the proof of the theorem, which hinges on results from Lie group theory, even yields a precise statement:
    It is sufficient to assume that $\lambda_d \geq \lVert \rho \rVert \vee 1$ where $\rho=\rho_G$ is a quantity that only depends on the Lie group $G$ (in fact, it depends only on the Lie algebra of the group).
    For instance, the Lie algebras of $\SU(2)$ and $\SO(3)$ are isomorphic, and the quantity $\lVert \rho \rVert$ is equal to $\frac{1}{2}$ in both cases.
    In general, $\rho$ is defined as half the sum of the positive roots with respect to a maximal torus of $G$, see Appendix~\ref{app:proof:details} for details.
    Because the quantity $\lVert \rho \rVert$ is quite small for compact matrix groups of low dimension, this assumption is not very restrictive in practice.
    Ad hoc arguments, which are again essentially based on the classical Clebsch-Gordan formula, show that for the groups $\SU(2)$ and $\SO(3)$ considered as examples in Section~\ref{sec:examples} below this additional assumption can be dropped at least in these special cases.
\end{remark}

\begin{remark}
Note that $\lambda$-designs are optimal designs for the two-dimensional sphere $\mathbb S^2 \subset \R^3$ (which is not a Lie group).
This result for $\mathbb S^2$ is a direct consequence of the result for the rotation group $\SO(3)$ since spherical harmonics are related to the Wigner-$D$ functions.
However, this fact can also be checked by direct calculations similar to the ones in \citet{Dette2005}.
For the spheres $\mathbb S^1$ and $\mathbb S^3$, which are the only Lie groups among spheres, optimal designs can be derived directly from the general Theorem~\ref{thm:lambda:Phi}.
The case of the one-dimensional sphere, which is of course well-known, arises as the easiest special case of the tori discussed in Section~\ref{subsec:example:tori} below.
\end{remark}

\section{Examples of $\Phi_p$- and $\Phi_{E_s}$- optimal designs for linear regression on Lie groups}\label{sec:examples} 

In this section, we illustrate the general procedure of constructing $\Phi_p$- and $\Phi_{E_s}$-optimal designs as $\lambda$-designs by concrete examples.
Note that in the concrete examples we are able to improve on the statement of the general Theorem~\ref{thm:lambda:Phi} by exploiting \emph{ad hoc} properties of the specific Lie groups under consideration.
In all the specific examples considered we will exploit the special knowledge that the product of eigenfunctions up to the chosen resolution level can expressed as the linear combination of the eigenfunctions up to a (slightly higher) resolution level.
First, we recap how the example of tori, that is, compact, connnected, and commutative Lie groups fits into our general theory.
The case of the circle group is discussed beforehand.
Then, we consider the problem of constructing $\Phi_p$-optimal designs for 
regression problem on the Lie groups $\SU(2)$ and $\SO(3)$, respectively.
We close by revisiting the example of the product manifold $\mathbb S^2 \times \SO(3)$ which is motivated by the application from biology that was already discussed in the introduction.

\subsection{Optimal designs on the circle group}\label{subsec:example:circle}

Our first example is used  to illustrate how known results about optimal designs on the circle group $\mathbb S^1$ relate in comparison to the general theory presented in the previous sections.
We start our discussion with the case of the circle group $\mathbb S^1 \cong \R/ \Z \cong (0,1]$.
We consider the regression problem
\begin{equation*}
     \Eb [Y|g] = \phib^\top(g) \boldsymbol{c}, \quad g \in (0,1]\,,
\end{equation*}
in terms of the (complex) Fourier basis which is given by
\begin{equation*}
    \phib(g) = \big ( 1, \ee^{2\pi \ii g}, \ee^{-2\pi \ii g},\ldots,\ee^{2\pi n \ii g}, \ee^{-2\pi n \ii g}\big )^\top  \in  \mathbb{C}^{2n+1}\,.
\end{equation*}
Writing out the regression equation, the model can thus be written as
\begin{equation*}
    \Eb [Y|g] = \sum_{k = n}^{n} c_k \ee^{2\pi \ii k g}\,.
\end{equation*}
Note that the eigenvalues of the Laplace-Beltrami operator here satisfy $\lambda_k \asymp k^2$ for $k \in \Z$.
Any product of two eigenfunctions with eigenvalues $\lesssim n^2$ can be expressed as the linear combination of eigenfunctions with eigenvalues $\lesssim (2n)^2$.
Hence, on needs to construct a $\lambda$-design for $\lambda \asymp (2n)^2$.
It is well known that such a design $\mu$ is obtained by putting equal weights to the points $\frac{i}{2n+1}$, $i=1,\ldots,2n+1$ which follows from the algebraic identity
\begin{equation*}
    \sum_{s=1}^{2n+1} \ee^{\frac{2 \pi \ii k s}{2n+1}} = \begin{cases}
       0 , & \text{ 
       {if} } k \neq 0,\\
       2n+1, & \text{ 
       {if} } k \in \{ -n,\ldots,n \} \setminus \{ 0 \}\,.
    \end{cases}
\end{equation*}
Hence, the design $\mu= \frac{1}{2n+1} \sum_{s=1}^{2n+1} \delta_{\frac{s}{2n+1}}$ is $\Phi_p$-
and $\Phi_{E_s}$-optimal for the regression model containing the $2n+1$ eigenfunctions up to the eigenvalue $\lambda_{n} \asymp n^2$.
In this situation, statement of the general Theorem~\ref{thm:lambda:Phi} would suggest to use a $\lambda$-design with $\lambda \asymp k^2$ for $k = \lceil \sqrt{14}n \rceil$.

\subsection{Optimal designs on tori $\mathbb T^d \cong \mathbb S^1 \times \ldots \mathbb S^1$}\label{subsec:example:tori}

The previous example of the circle group $\mathbb S^1$ can be generalized to the case of tori $\mathbb T^d \cong \mathbb S^1 \times \ldots \times \mathbb S^1 \cong (0,1]^d$ of arbitrary dimension (in essence, the tori are the compact, connected and commutative Lie groups).
The canonical basis for the regression problem in this case contains the basis functions of the form
\begin{equation}\label{eq:eigenfct:torus}
    (0,1]^d \mapsto \C, \quad (x_1,\ldots,x_d) \mapsto \ee^{2\pi \ii k_1 x_1}\cdot \ldots \cdot \ee^{2\pi \ii k_d x_d} = \ee^{2\pi \ii k_1 x_1 + \ldots + 2\pi \ii k_d x_d}\,,
\end{equation}
where $k=(k_1,\ldots,k_d) \in \Z^d$ is a multi-index.
Note that these functions form a (complete) system of normalized eigenfunctions of the Laplace-Beltrami operator.
More precisely, the function in \eqref{eq:eigenfct:torus} is an eigenfunction with eigenvalue $\lambda_k \asymp k_1^2 + \ldots + k_d^2$.
It is easily checked that the product of two eigenfunctions with multiindices $k=(k_1,\ldots,k_d)$ and $\ell = (\ell_1,\ldots,\ell_d)$, respectively, is an eigenfunction with eigenvalue $\lambda_{k + \ell} \leq 2\lambda_k + 2\lambda_\ell \leq 4 \max \{ \lambda_k,\lambda_\ell \}$.
Consequently, if the regression problem \eqref{eq:regress01} is stated in terms of the eigenfunctions up to a certain resolution level $\lambda$, then it is sufficient to construct a $4\lambda$-design which will then be $\Phi_p$-
and $\Phi_{E_s}$-optimal.
Also in this case, the constant $14$ in the statement of the general Theorem~\ref{thm:lambda:Phi} can be reduced.

\subsection{Optimal designs on $\SU(2)$}\label{subsec:example:SU2} We now discuss optimal designs for linear regression on the special unitary group $\SU(2)$ as a first example of a non-abelian Lie group.

\subsubsection{Preliminaries}  
Recall that the real Lie group $\SU(2)$ is defined as
\begin{align*}
    \SU(2) &= \{ A \in \GL(2,\C) \mid A^\ast A = A A^\ast = I_2 \}\\
    &= \left\lbrace  \begin{pmatrix}
    \alpha & \beta\\
    -\bar \beta & \bar \alpha
    \end{pmatrix}   \in \C^{2 \times 2} \mid \lvert \alpha \rvert^2 + \lvert \beta \rvert^2 = 1 \right\rbrace.
\end{align*}
Alternatively, using the identifications $\alpha = a+d \imath$ and $\beta = b+c\imath$, the group $\SU(2)$ can be described as the set of unit quaternions,
\begin{equation*}
    \{ a1 + bi + cj +dk \mid a,b,c,d \in \R \text{ with } a^2+b^2+c^2+d^2=1 \}\,,
\end{equation*}
together with the well-known rules for the quaternion product, that is, $i^2 = j^2 = k^2 = -1$, $ij=k$, $jk=i$, $ki=j$, $ji=-k$, $kj=-i$, and $ik=-j$.
The unit quaternions may be identified with the three-dimensional sphere $\mathbb S^3 \subset \R^4$.
The irreducible representations of $\SU(2)$ are given by the actions $\pi_m$ of $\SU(2)$ on the vector spaces $V_m \subset \C[X,Y]$ of homogeneous polynomials of degree $m$,
\begin{equation*}
    V_m\ni f \mapsto \pi_m f \in V_m \text{ with } (\pi_m(A) f)(X,Y) = f((X,Y) A) \text{ for } A \in \SU(2)\,.
\end{equation*}
For any $m \in \N_0$, the dimension of $\pi_m$ is $m+1$.
Using the mentioned identification of $\SU(2)$ with $\mathbb S^3$, the corresponding matrix coefficients can be identified with the space hyperspherical harmonics of order $m$, which is a subspace of dimension $(m+1)^2$ of $L^2(\mathbb S^3)$.
These subspaces in turn can be identified with the eigenspaces of the Laplace-Beltrami operator.
The eigenvalue $\lambda_k$ corresponding to the irreducible representation $\pi_k$ is $\lambda_k = k(k+2)$ for $k \in \N_0$.

\subsubsection{An explicit example (spherical $2$-design with $n=5$ points on $\mathbb S^3$)}\label{subsubsec:SU2:explicit}
Now assume that we consider the regression model~\eqref{eq:regress01} where the regression function is assumed to be a linear combination of the eigenfunctions corresponding to the first two eigenvalues $\lambda_0 = 0$ and $\lambda_1 = 3$ of the Laplace-Beltrami operator.
Recall from Remark~\ref{rem:rho} that $\lambda_1=3$ for the results of Theorem~\ref{thm:lambda:Phi} to hold since $\rho=\frac{1}{2}$ in this case.
The regression model can be thus be written as
\begin{equation*}
    \Eb [Y|g] = \sum_{\ell=0}^1 \sum_{\mu_1=0}^\ell \sum_{\mu_2=-\mu_1}^{\mu_1} c_{\ell, \mu_1,\mu_2} Y_{\ell,\mu_1,\mu_2}(g)\,,\end{equation*}
where the $Y_{\ell,\mu_1,\mu_2}$ denote the 
\emph{hyperspherical harmonics} (see \citet{Avery1982} for a definition of these functions; the argument $g$ is usually parameterized by polar coordinates).
In particular, the number of regression functions in this model is equal to $1+4=5$.
As the proof of Theorem~\ref{thm:lambda:Phi} shows one has to construct a $\lambda_k$-design for an eigenvalue $\lambda_k$ such that products of linear combinations of eigenfunctions with eigenvalues $\leq \lambda_1$ can be expressed as linear combinations of eigenfunctions with eigenvalues $\leq \lambda_k$. 
The general Theorem~\ref{thm:lambda:Phi} states that this holds for any $14\lambda_1$-design which consequently is $\Phi_p$- and $\Phi_{E_s}$-optimal for any $-\infty \leq p < 1$.
Since the strictly increasing sequence of eigenvalues is given by $\lambda_k = k(k+2)$ for the sphere $\mathbb S^3$ of (non-normalized) volume $2\pi^2$, a $14\lambda_1$-design for this case is a $\lambda_5$-design (since $\lambda_5=35\leq 14 \cdot \lambda_1 < 48 = \lambda_6$).
However, in the special case of the special unitary group $\SU(2)$, this result can be improved by the well-known Clebsch-Gordon formula for $\SU(2)$ (see Corollary 5.6.2 in \citet{Kowalski2014}). The Clebsch-Gordon formula states that the tensor product $\pi_1 \otimes \pi_1$ decomposes as 
\begin{equation*}
    \pi_1 \otimes \pi_1 \cong \pi_2 \oplus \pi_0\,.
\end{equation*}
As a consequence, the product of linear combinations of eigenfunctions with eigenvalues $\leq \lambda_1$ can be expressed as a linear combination of eigenfunctions for eigenvalues $\leq \lambda_2$.
Hence, it is sufficient to construct a $\lambda_2$-design in this special case (instead of a $\lambda_5$-design as suggested by the general theorem).

Thanks to the identification $\SU(2) \cong \mathbb S^3$ one can rely on well-known results on $\lambda$-designs for spheres (often referred to as $t$-designs in this case with the correspondance that $\lambda_k$-designs in our notation correspond to $t$-designs with $t=k$).
It is known for the case of the sphere $\mathbb S^3$ that the cardinality $n$ of a spherical $t$-design satisfies the bound
\begin{equation}\label{eq:lower:bound:t_design}
    n \geq \begin{cases}
        2 \binom{\lfloor \frac{t}{2} \rfloor + 3}{3}, & \text{ if } t \text{ is odd,}\\[3mm]
        \binom{\frac{t}{2} + 3}{3} + \binom{\frac{t}{2} + 2}{3},& \text{ if } t \text{ is even}\,,
    \end{cases}
\end{equation}
(this bound is the special case of Equation~(1.3) in \citet{Bajnok1991} for $d=4$), and a $t$-design $X$ is called \emph{tight} if equality holds in \eqref{eq:lower:bound:t_design} for $n=\lvert X \rvert$.
It was proven in \citet{Bannai1979} and \citet{Bannai1980} that tight $t$-designs exist for $\mathbb S^3$ (and any other sphere $\mathbb S^{d-1}$ with $d \geq 3$) if and only if $t \in \{ 1,2,3,4,5,7,11 \}$. 
For example, in our case of interest ($t=2$), it follows that a tight $t$-design with $n=5$ points exists.

An explicit construction of \emph{tight} $t$-designs on $\mathbb S^3$ for $t=2$ is given in \citet{Mimura1990} which we make precise now for $n=5$.
Take the primitive $5$-th root of unity $\zeta = \exp(2\pi \ii/5)$.
For $k=1,2$, define $c_k$ and $s_k$ in $\R^5$ through
\begin{equation*}
    c_k + \ii s_k = {\frac{1}{\sqrt 2}} (\zeta^k,\zeta^{2k},\ldots,\zeta^{nk})^\top\,.
\end{equation*}
Then, the five columns of the matrix $D \in \R^{4\times 5}$ with rows $c_1,s_1,c_2,s_2$ form a $2$-spherical design of minimal cardinality $n=5$.
The matrix $D$ is given by
\begin{align*}
    D = \frac{1}{\sqrt{2}}&\begin{pmatrix*}[r]
        \cos(2\pi/5)\phantom{--} & \cos(4\pi/5)\phantom{--} & \cos(6\pi/5)\phantom{--} & \cos(8\pi/5)\phantom{--} & \cos(2\pi)\\
        \sin(2\pi/5)\phantom{--} & \sin(4\pi/5)\phantom{--} & \sin(6\pi/5)\phantom{--} & \sin(8\pi/5)\phantom{--} & \sin(2\pi)\\
        \cos(4\pi/5)\phantom{--} & \cos(8\pi/5)\phantom{--} & \cos(12\pi/5)\phantom{--} & \cos(16\pi/5)\phantom{--} & \cos(4\pi)\\
        \sin(4\pi/5)\phantom{--} & \sin(8\pi/5)\phantom{--} & \sin(12\pi/5)\phantom{--} & \sin(16\pi/5)\phantom{--} & \sin(4\pi)
    \end{pmatrix*}\\
    \approx &\begin{pmatrix*}[r]
        0.22\phantom{--} & -0.57\phantom{--} & -0.57\phantom{--} & 0.22\phantom{--} & 0.71\\
        0.67\phantom{--} & 0.42\phantom{--} & -0.42\phantom{--} & -0.67\phantom{--} & 0\\
        -0.57\phantom{--} & 0.22\phantom{--} & 0.22\phantom{--} & -0.57\phantom{--} & 0.71\\
         0.42\phantom{--} & -0.67\phantom{--} & 0.67\phantom{--} & -0.42\phantom{--} & 0
    \end{pmatrix*}\,,
\end{align*}
with the running convention that all values are rounded to two decimals.
Using the identification of the three dimensional sphere $\mathbb S^3$ and $\SU(2)$ from Section~\ref{subsec:example:SU2}, we obtain the following five design points on $\SU(2)$:
\begin{equation*}
    \begin{pmatrix}
        \cos(2k\pi/5) + \ii \sin(4k\pi/5)\phantom{--} & \sin(2k\pi/5) + \ii \cos(4k\pi/5)\\
        -\sin(2k\pi/5) + \ii \cos(4k\pi/5)\phantom{--} & \cos(2k\pi/5) - \ii \sin(4k\pi/5)
    \end{pmatrix}\,, \qquad k = 1,\ldots,5\,,
\end{equation*}
or, explicitly:
\begin{align*}
&\begin{pmatrix*}[r]
    0.22+0.42\ii\phantom{--} & 0.67-0.57\ii\\
    -0.67-0.57\ii\phantom{--} &  0.22-0.42\ii
\end{pmatrix*}\,, \quad \begin{pmatrix*}[r]
    -0.57-0.67\ii\phantom{--} & 0.42+0.22\ii\\
  -0.42+0.22\ii\phantom{--} & -0.57+0.67\ii
\end{pmatrix*}\,,\\
&\begin{pmatrix*}[r]
    -0.57+0.67\ii\phantom{--} & -0.42+0.22\ii\\
    0.42+0.22\ii\phantom{--} & -0.57-0.67\ii
\end{pmatrix*}\,, \quad \begin{pmatrix*}[r]
    0.22-0.42\ii\phantom{--} & -0.67-0.57\ii\\
    0.67-0.57\ii\phantom{--} & 0.22+0.42\ii
\end{pmatrix*}\,,\\
&\begin{pmatrix*}[r]
    0.71\phantom{--} & 0.71\ii\\
    0.71\ii\phantom{--} & 0.71
\end{pmatrix*}\,,
\end{align*}
and once again, real and imaginary part of all entries are rounded to two decimals.

\subsubsection{Another explicit example}\label{subsubsec:another}
We briefly report on another explicit example that has been studied in the literature on spherical $t$-designs (see \citet{Bajnok1991} for a more detailed account).
Assume that we want to consider the regression model~\eqref{eq:regress01} where the regression function is assumed to be a linear combination of the eigenfunctions corresponding to the first five eigenvalues $\lambda_0 = 0$, $\lambda_1 = 3$, $\lambda_2=8$, $\lambda_3=15$, and $\lambda_4=24$ of the Laplace-Beltrami operator.
In analogy with the previous example, the model can be written as
\begin{equation}\label{eq:model:4:S3}
    \Eb [Y|g] = \sum_{\ell=0}^4 \sum_{\mu_1=0}^\ell \sum_{\mu_2=-\mu_1}^{\mu_1} c_{\ell, \mu_1,\mu_2} Y_{\ell,\mu_1,\mu_2}(g)\,.\end{equation}
Altogether, the model is stated in terms of $55$ eigenfunctions.
 Theorem~\ref{thm:lambda:Phi} suggests the  construction of a $\lambda_{17}$-design in order to obtain $\Phi_p$-  and $\Phi_{E_s}$-optimality. In this example   we can  apply Clebsch-Gordon theory on $\SU(2)$ which  tells us that it is sufficient to construct a $\lambda_8$-design for this purpose.
For the regression model \eqref{eq:model:4:S3}, an approximate $\Phi_p$- and $\Phi_{E_s}$-optimal design with unequal weights has been proposed in Example~3.1 of \citet{Dette2019}.
The number of design points in this non-uniform  approximate 
optimal design, which is of product type, is $5\cdot 5 \cdot 9=225$.
The existence of a spherical $t$-design for $t=8$ (or equivalently, a $\lambda_8$-design) of cardinality $n$ is guaranteed for all sufficiently large $n$.
Equation~(1.3) in \citet{Bajnok1991} states that such an $n$ must be $\geq 55$ (in fact, even $>$ must hold here since no tight spherical $t$-designs exist for $t=8$).
From a practical point of view it might be sufficient to consider point sets that are numerical spherical designs within the limits of numerical precision only (see \citet{Womersley2018} for further details).
Under
\begin{center}
    \url{https://web.maths.unsw.edu.au/~rsw/Sphere/EffSphDes/S3SD.html},
\end{center}
which from now on is cited as \citep{Womersley2017}, such numerical spherical designs of moderate cardinality have been made available.
For instance, for our example of interest ($t=8$) a numerical spherical design, say  $\mu_{\text{num}}$, consisting of $n=97$ sampling points has been found (file \texttt{sdf008.00097} from \citep{Womersley2017}).
The corresponding exact design has equal weights $1/n = 1/97$ at its support points and the resulting information matrix is numerically close to the identity matrix $I_{55}$.
We now use the rounding procedure considered in \citet{Pukelsheim1992} 
to  construct an exact design, say $\mu_{\text{rounded}}$, with sample size $n=97$ from the   design with $225$ support points constructed in Example~3.1 of \citet{Dette2019}.
Next, we  compare the resulting design with the design obtained by our approach 
via its efficiency defined as
\begin{equation}\label{eq:efficiency:S3}
    \mathrm{eff}_\Phi (\mu_{\text{rounded}}) = \frac{\Phi(\mu_{\text{rounded}})}{\Phi(\mu_{\text{num}})}
\end{equation}
where $\Phi$ can be any of the optimality criteria introduced in Section~\ref{sec:regression}.
Exemplary, we consider the $\Phi_{E_s}$-criteria for $s=1,\ldots,55$ and the $\Phi_p$-criteria for $-2 \leq p < 1$. In 
Figure~\ref{fig:eigenvalues:S3} we show  the eigenvalues of the information matrix $M(\mu_{\text{num}})$ of  the numerical design $\mu_{\text{num}}$ derived from the approximate optimal designs in \cite{Dette2019}
(left part) and of the information matrix $M(\mu_{\text{rounded}})$ for the rounded design $\mu_{\text{rounded}}$ obtained by our approach (right part).
For example, the smallest eigenvalues of the matrix $M(\mu_{\text{rounded}})$ are (numerically) close to $0$, whereas all eigenvalues of $M(\mu_{\text{num}})$ are close to $1$. In particular, the information matrix $ M(\mu_{\text{rounded}})$ is nearly singular and therefore the $97$-point design derived 
from the design proposed by \cite{Dette2019} cannot be used for estimating all parameters in the model.

\begin{figure}[h]
    \centering
         \centering
         \includegraphics[height=6.5cm,width=6.5cm]{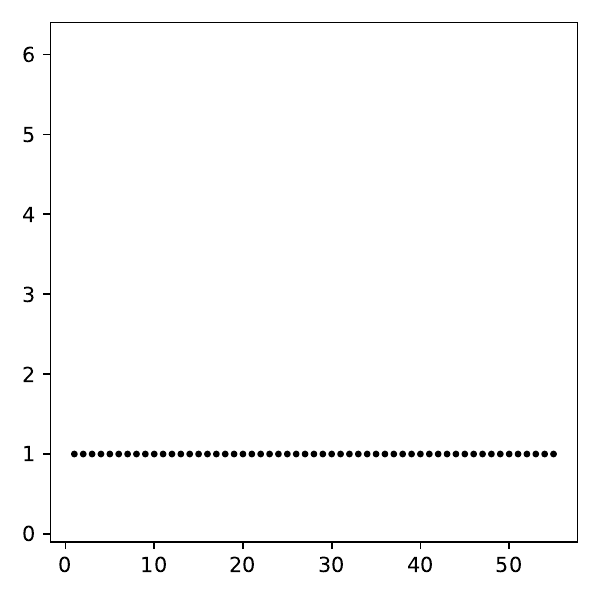}
\includegraphics[height=6.5cm,width=6.5cm]{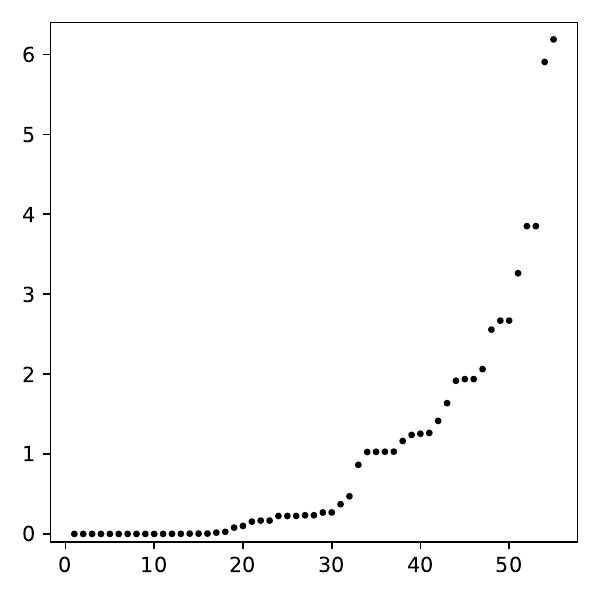}
     \caption{\it Eigenvalues of the information matrices in model \eqref{eq:model:4:S3} for the designs $\mu_{\text{num}}$ (left part) and $\mu_{\text{rounded} }$ (right part).}\label{fig:eigenvalues:S3}
\end{figure}

In Figure~\ref{fig:efficiency:S3} we display the efficiency of the design $\mu_{\text{rounded}}$ with respect to both the $\Phi_{E_s}$ criteria for $s=1,\ldots,55$ (left part) and with respect to Kiefer's $\Phi_p$-criteria for the range $-2\leq p < 1$ (right part).
For the latter, we consider the matrix $K$ corresponding to the estimation of only the $14$ coefficients associated with the three smallest eigenvalues $\lambda_0 = 0$, $\lambda_1 = 3$, and $\lambda_2=8$.
This guarantees estimability of all parameters under consideration as the range inclusion condition \eqref{eq:cond:range:inclusion} for the information matrix $M(\mu_{\text{rounded}})$ is satisfied (that is, we put $q=2$ and $k_i=i$ for $i=0,1,2$ in the notation of Section~\ref{sec:opt2}).
Note that the $\Phi_{E_1}$-criterion coincides with the $E$-optimality criterion.
While the design $\mu_{\text{rounded}}$ is rather inefficient for small $s$ its $\Phi_{E_s}$-efficiency is increasing with respect to $s$ and is close to $1$ if $s=55$.
The $E$-optimality criterion arises also as a special case of Kiefer's $\Phi_p$-criteria  for $p=-\infty$.The values $p=-1$ and $p=0$, corresponding to the very popular  $A$- and $D$-optimality criterion for which we have
\begin{equation*}
    \mathrm{eff}_{\Phi_{-1}} (\mu_{\text{rounded}}) \approx 2.59~\% 
\end{equation*}
and
\begin{equation*}
    \mathrm{eff}_{\Phi_{0}} (\mu_{\text{rounded}}) \approx 33.93~\%\,.
\end{equation*}
Thus the $\lambda_8$-designs obtained by our approach leads  a substantial improvement of the designs, which are currently available.

\begin{figure}[h]
     \centering
\includegraphics[height=6.5cm,width=6.5cm]{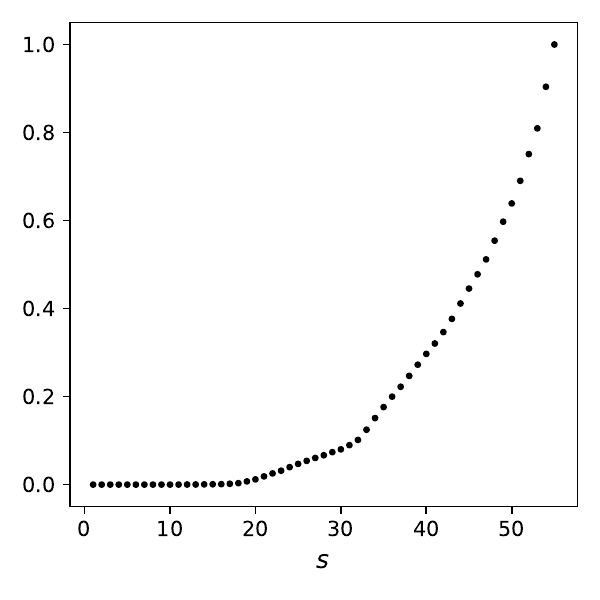}
     \includegraphics[height=6.5cm,width=6.5cm]{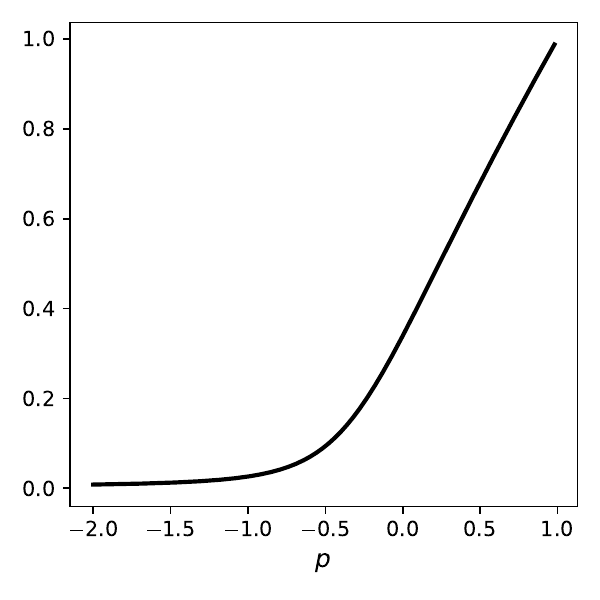}
    \caption{\it $\Phi_{E_s}$-efficiency (left part) and $\Phi_{p}$-efficiency (right part) of the 
    design $\mu_{\text{rounded}}$ with $n=97$ points. This design is  obtained by applying the rounding procedure of \citet{Pukelsheim1992} to the approximate design with $225$ points and unequal weights determined in Example~3.1 of \citet{Dette2019}. For this sample size a $\lambda_8$-design exists and is both $\Phi_{E_s}$- and $\Phi_p$-optimal (this design is determined numerically and has $n=97$ points.)}
    \label{fig:efficiency:S3}
\end{figure}

\subsubsection{A general procedure for the construction of $t$-designs}
In Section~\ref{subsubsec:SU2:explicit}, we have given an explicit constructions of a $t$-designs on $\mathbb S^3$ for $t=2$.
Similar explicit construction for small values of $t$ can be derived from the literature on $t$-design; see \citet{Bajnok1992}, and the references therein.
However, since the construction of tight $t$-designs is restricted to few potential values of $t$, it is important to have a general procedure that allows to generate $t$-designs of manageable size for any choice of $t$.
A procedure for this purpose has been proposed in the work of ~\citet{Bajnok1992} which we describe in the following.
Given a Lebesgue-integrable function $w(x)$ with positive integral on every non-degenerate subinterval $I$ of $[-1,1]$, a finite set $c_1,\ldots,c_n \in [-1,1]$ of distinct points is called an interval $t$-design for the weight function $w(x)$ if
\begin{equation*}
    \frac{1}{n} \sum_{j=1}^n f(c_j) = \frac{1}{\int_{-1}^1 w(x) \dd x} \int_{-1}^1 f(x)w(x)\dd x
\end{equation*}
for every polynomial $f(x)$ of degree at most $t$.
\citet[Theorem~3]{Bajnok1992} (more precisely, the special case resulting from taking $d=4$ and $m=3$ in the statement of this theorem) states that the set of points $(x_1,x_2,x_3,x_4)$ with
\begin{align*}
    x_1 &= c_{1,j_1},\\
    x_2 &= c_{2,j_2}\sqrt{1-c_{1,j_1}^2},\\
    x_3 &= \cos \left( \frac{2k\pi}{3} \right)  \sqrt{ (1-c_{1,j_1}^2) (1-c_{2,j_2}^2)},\\
    x_4 &= \sin \left( \frac{2k\pi}{3} \right)  \sqrt{ (1-c_{1,j_1}^2) (1-c_{2,j_2}^2)}\,, 
\end{align*}
where $i=1,2$, $k=1,2,3$, $j_i = 1,\ldots,n_i$ and the set $\{ c_{i,1}, c_{i,2}, \ldots, c_{i,n_i} \}$ is an arbitrary weighted interval $t$-design for the weight function $w_i(x) = (1-x^2)^{(2-i)/2}$, forms a spherical $2$-design on $\mathbb S^3$.
The design constructed this way has $3n_1n_2$ distinct points.
Work of \citet{Bajnok1992} shows the lower bounds $n_1 \geq c_1 t^9$ and $n_2 \geq c_2 t^{4.5}$ (the numerical constants $c_1$ and $c_2$ can be made precise).

\subsection{Optimal design on $\SO(3)$}\label{subsec:example:SO3}
We now consider the problem of linear regression on the special orthogonal group $\SO(3)$.
Such problems arise in a wide variety of applications: see, for example, 
\citet{Chirikjian2021,Schaeben2007,Hielscher2013,Kovacs2003}.
Recall that the matrix group $\SO(3)$ is defined as
\begin{equation*}
    \SO(3) = \{ A \in \R^{3 \times 3} \mid A^\top A = A A^\top = I_3 \text{ and } \det(A)=1 \}\,.
\end{equation*}
The natural basis for functions on $\SO(3)$ is given by the
Wigner $D$-functions $D^\ell_{m,n}$ which are for $\ell \in \N_0$  and $m, m' \in \Z$ with $\lvert m \rvert, \lvert m' \rvert \leq \ell$ defined by
\begin{equation*}
    D^\ell_{m,m'} (\alpha,\beta,\gamma) = \ee^{-\ii m \alpha} d^\ell_{m,m'}(\beta) \ee^{-\ii m' \gamma}
\end{equation*}
for Euler angles $\alpha,\gamma \in [0,2\pi)$ and $\beta \in [0,\pi]$, where the Wigner $d$-functions $d^\ell_{m,m'}(\beta)$ are defined by (see \citet{Shen2017} for more details)
\begin{equation*}
    d^\ell_{m,m'}(\beta) = (-1)^\nu \binom{2\ell-k}{k+a}^{1/2} \binom{k+b}{b}^{-1/2} \left( \sin \left( \frac{\beta}{2} \right)\right)^a \left( \cos\left( \frac{\beta}{2} \right) \right)^b P_k^{(a,b)}(\cos \beta)
\end{equation*}
for $k = \ell - \max \{ \lvert m \rvert, \lvert m' \rvert \}$, $a=\lvert m - m'\rvert$, $b= \lvert m + m' \rvert$,
\begin{equation*}
    \nu = \begin{cases}
        0, & \text{ 
       {if} } m' \leq m,\\
        m' - m, & \text{ 
        {if} } m' > m,
    \end{cases}
\end{equation*}
and $P_k^{(a,b)} (x) $ denotes the Jacobi polynomial of degree $k$ with parameters $a$ and $b$ (see, for example, \citet{szego75} for more details).

Representing the regression function by eigenfunctions associated with eigenvalues up to a certain resolution is equivalent to considering Wigner $D$-functions up to a certain threshold, say $\ell \leq L$.
Hence, the regression model  can be formulated as
\begin{equation}\label{eq:model:SO3}
    \Eb [Y|g] = \sum_{\ell=0}^L \sum_{m=-\ell}^\ell \sum_{m'=-\ell}^\ell c^\ell_{m,m'} D^\ell_{m,m'}(g)
\end{equation}
and has
$$
\sum_{\ell=0}^L (2\ell+1)^2 = \frac{(2L+1)(2L+2)(2L+3)}{6}
$$ parameters.
In order to construct an optimal design it is helpful to understand the relationship between the irreducible representations of the groups $\SU(2)$ and $\SO(3)$.
Assuming that the irreducible representations of both groups are parameterized by $\N_0$ (with the irreducible representations ordered with respect to the associated increasing sequence of eigenvalues of the negative Laplace-Beltrami operator), there exists a one-to-one correspondance between the irreducible representations of $\SO(3)$ and the irreducible representations of $\SU(2)$ with even index (see \citet[Proposition~5.6.9]{Kowalski2014} and \citet[Proposition~G.III.15]{Berger1971} for details).
This correspondance can be made explicit via the covering map $p \colon \SU(2) \to \SO(3)$ given by
\begin{equation*}
    p \left( \begin{pmatrix}
        x+\ii y & u + \ii  v\\
        -u + \ii v & x - \ii y
    \end{pmatrix}
    \right) = \begin{pmatrix}
        x^2+ y^2 - u^2 + v^2 & 2(yv+xu) & -2(xv-yu)\\
        -2(xu-yv) & x^2 - y^2 - u^2 + v^2 & 2(xy+uv)\\
        2(xv+yu) & -2(xy-uv) & x^2 - y^2+u^2 -v^2
    \end{pmatrix}\,,
\end{equation*}
and we refer the reader to \citet[p.~246]{Kowalski2014} for the details.
Consequently, if the series expansion in \eqref{eq:model:SO3} is truncated at $L$, the corresponding series for the regression model with predictors on $\SU(2)$, which is obtained by composition with $p$, is truncated at level $2L$.
By Clebsch-Gordan theory, it is hence sufficient construct a $\lambda$-design on $\SU(2)$ for $\lambda=\lambda_{4L}$, which is $\Phi_p$- and $\Phi_{E_s}$-optimal for the corresponding regression problem on $\SU(2)$.
Then, this optimal design can be mapped to $\SO(3)$ by means of the map $p$.\\

For instance, if we consider model \eqref{eq:model:SO3} for $L=1$, one can construct a $\lambda_4$-design on $\SU(2)$ as in Section~\ref{subsubsec:another} and project it to the rotation group via the map $p$.
\citep{Womersley2017} provides two possible solutions for such a $\lambda_4$ design on $\SU(2)$.
The first one is a numerical $\lambda_4$-design consisting of $n=20$ points (file \texttt{sdf004.00020} from \citep{Womersley2017}) which are mapped via the map $p$ to $20$ distinct points of $\SO(3)$.
The preferable solution here, however, is to start with a known regular and exact $\lambda_5$-design on $\SU(2)$ consisting of $n=24$ points (file \texttt{sdr005.00024} from \citep{Womersley2017}).
Since this latter design consists of pairs of antipodal points, its image under the map $p$ contains only $12$ distinct points which form a smaller optimal design for the regression problem \eqref{eq:model:SO3}.
This finding is consistent with the discussion in Chapter~4 of \citet{Graef2009} where $t$-designs on the rotation group $\SO(3)$ are discussed.
The authors of that paper also construct a set of rotations of cardinality equal to $12$ (denoted by $\mathcal X_T$ in that paper and nicely illustrated in the left plot of their Figure~2).
It is shown that the elements of this set can be used as the nodes of a quadrature formula on $\SO(3)$ with equal weights that reproduces integrals of polynomials of degree $\leq 2$ exactly.
This property is equivalent with the one defining a $\lambda_2$-design on the rotation group.
\citet{Graef2009} conjecture that $12$ is indeed the minimal number of nodes for such a quadrature formula.
Quadrature rules with non-equal weights on the rotation group have also been studied in \citet{Khalid2016}.

At this point, we can already compare the  optimal design with $12$ points 
constructed by our new method with designs obtained by an equally spaced sampling of Euler angles. For this purpose we denote by  $\mu_{(n_\alpha,n_\beta,n_\gamma)} $  the design with $n_\alpha,n_\beta$ and $n_\gamma$  equally spaced points on the range of the  three  Euler angles $\alpha  \in [0,2\pi)$, $\beta \in [0,\pi]$ and  $\gamma \in [0,2\pi)$, respectively.
Since the total number of design points is equal to the product $n_\alpha n_\beta  n_\gamma$, one obtains for the total sample size $n=12$  the combinations $(2,2,3)$, $(2,3,2)$, and $(3,2,2)$.
It  turns out that these uniform designs do not allow to estimate all parameters in model \eqref{eq:model:SO3} with $L=1$ as  the corresponding $10 \times 10$
matrices   $M(\mu_{(n_\alpha,n_\beta,n_\gamma)})$ do not have full rank. In contrast, our approach  produces a design with $12$ points and information matrix $I_{10}$, such that all parameters are estimable.

Next we investigate designs based on equally spaced Euler angles of larger sample size such that the corresponding information matrix has full rank.
More precisely, we consider $n_\alpha,n_\beta$, and $n_\gamma$ 
equally spaced points on the range of the
three Euler angles, where $(n_\alpha,n_\beta,n_\gamma)$ is chosen as $(6,4,6)$, $(8,5,8)$, and $(10,6,10)$.
Therefore the total sample size of design which can be implemented without rounding
is $n=144$, $320$ and $600$, respectively. 
For the  designs $\mu_{(6,4,6)}$, $\mu_{(8,5,8)}$, and $\mu_{(10,6,10)}$ the $\Phi_{p}$-efficiencies are displayed in Figure~\ref{fig:efficiency_comparison_Phip_SO3_Euler} for $-10 \leq p < 1$.
The results impressively show that the same efficiency as equally spaced Euler angle sampling can be obtained with a much smaller sample size.
For example, consider the design $\mu_{(6,4,6)}$, which can be used for a total sample size $n=144$.
The $\Phi_{-\infty}$- and $\Phi_{-1}$-efficiency of this design is given by $0.667$ and $0.907$, respectively.
Therefore using the design with only $8$ observations at each of the $12$  points constructed by the method proposed in this paper yields the same estimation accuracy with respect to the $\Phi_{-\infty}$-criterion as the 
equally spaced Euler angle sampling with $n=144$ observations. Similarly,  the  design with $n=132$ yields approximately the same $\Phi_{-1}$-efficiency.

\begin{figure}[h]
\includegraphics[width=0.7\textwidth]{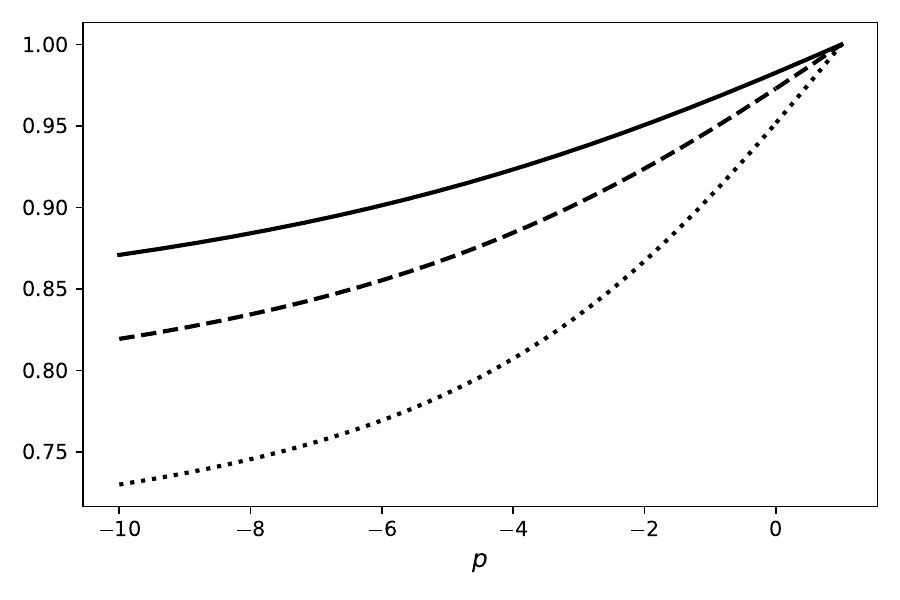}
\caption{\it $\Phi_{p}$-efficiencies of the designs $\mu_{(6,4,6)}$ (dotted line), $\mu_{(8,5,8)}$ (dashed line), and $\mu_{(10,6,10)}$ (solid line). These designs are based on uniform designs for the three Euler angles and have equal weights  at the support  points.  The minimal sample size to obtain an implementable optimal design from the approximate
design is therefore  $144$ $(\mu_{(6,4,6)})$, $320$ $(\mu_{(8,5,8)})$, and $600$ $(\mu_{(10,6,10)})$,  respectively.
For the given regression model \eqref{eq:model:SO3} with $L=1$ there exists a design  with equal weights  at $12$ which is   $\Phi_{E_s}$- and $\Phi_p$-optimal.}
\label{fig:efficiency_comparison_Phip_SO3_Euler}
\end{figure}

\subsection{Optimal design on $\mathbb S^2 \times \SO(3)$}
We conclude  our discussion by taking a closer look at the product manifold $\mathbb S^2 \times \SO(3)$ which is relevant for the application from biology already discussed in the introduction.
For this example, the most convenient basis is given by the  functions
\begin{equation}\label{eq:eigenfunctions:padhorny}
    \varphi_{\ell_{\mathbb S^2},\ell_{\SO(3)},m,m_1,m_2}(\theta,\phi,\alpha,\beta,\gamma) \defeq Y^{\ell_{\mathbb S^2}}_m(\theta,\phi) D^{\ell_{\SO(3)}}_{m_1,m_2}(\alpha,\beta,\gamma)~,
\end{equation}
where $0 \leq \ell_{\mathbb S^2} \leq L_{\mathbb S^2}$ and $0 \leq \ell_{\SO(3)} \leq L_{\SO(3)}$ are the indices corresponding to ${\mathbb S^2}$ and  $ {\SO(3)}$, respectively.
The work  of  
\citet{Padhorny2016} uses slightly different basis functions (see Equation~~[S1] in the Supporting Information in \citet{Padhorny2016}).
In~\eqref{eq:eigenfunctions:padhorny}, the second factor $D^{\ell_{\SO(3)}}_{m_1,m_2}(\alpha,\beta,\gamma)$ and the Euler angles $\alpha$, $\beta$, and $\gamma$ are defined exactly as in Section~\ref{subsec:example:SO3}.
The  functions $Y^{\ell_{\mathbb S^2}}_m(\theta,\phi)$ are the 
spherical harmonics which form an orthogonal basis of
$L^2(\mathbb S^2)$.
The spherical harmonics are usually defined in terms of spherical coordinates where $\theta \in [0,\pi]$ denotes the polar angle and $\varphi \in [0,2\pi)$ the azimuthal angle.
The eigenvalues associated with the eigenfunctions in~\eqref{eq:eigenfunctions:padhorny} are $\ell_{\mathbb S^2} (\ell_{\mathbb S^2}+1)\ell_{\SO(3)}(\ell_{\SO(3)}+1)$.
Again, by Clebsch-Gordan theory, the product of two eigenfunctions of the form \eqref{eq:eigenfunctions:padhorny} with $\ell_{\mathbb S^2} \leq L_{\mathbb S^2}$ and $\ell_{\SO(3)} \leq L_{\SO(3)}$ can be written as the linear combination of eigenfunctions of the same form with indices $\ell_{\mathbb S^2} \leq 2L_{\mathbb S^2}$ and $\ell_{\SO(3)} \leq 2L_{\SO(3)}$.
Using the basis functions from \eqref{eq:eigenfunctions:padhorny}, we consider the regression model given by
\begin{equation}\label{eq:model:padhorny}
    \Eb [Y|g] = \sum_{\ell_{\mathbb S^2}=0}^{L_{\mathbb S^2}} \sum_{\ell_{\SO(3)}=0}^{L_{\SO(3)}} \sum_{m=-\ell_{\mathbb S^2}}^{\ell_{\mathbb S^2}} \sum_{m_1,m_2=-\ell_{\SO(3)}}^{\ell_{\SO(3)}} c_{\ell_{\mathbb S^2},\ell_{\SO(3)},m,m_1,m_2} \varphi_{\ell_{\mathbb S^2},\ell_{\SO(3)},m,m_1,m_2}(g)
\end{equation}
where $g=(\theta,\phi,\alpha,\beta,\gamma)$.
It follows that a $\Phi_p$-optimal design for model \eqref{eq:model:padhorny} can be implemented as a product design of a spherical $t$-design with $t=2L_{\mathbb S^2}$ (covering the $\mathbb S^2$ part) and a $\lambda$-design with $\lambda = \lambda_{2L_{\SO(3)}}$ for the $\SO(3)$ part (which is again obtained by projecting a $\lambda_{4L_{\SO(3)}}$ design on $\SU(2)$ via the map $p$ as  defined in Section~\ref{subsec:example:SO3}).\\

\begin{figure}
\includegraphics[width=0.7\textwidth]{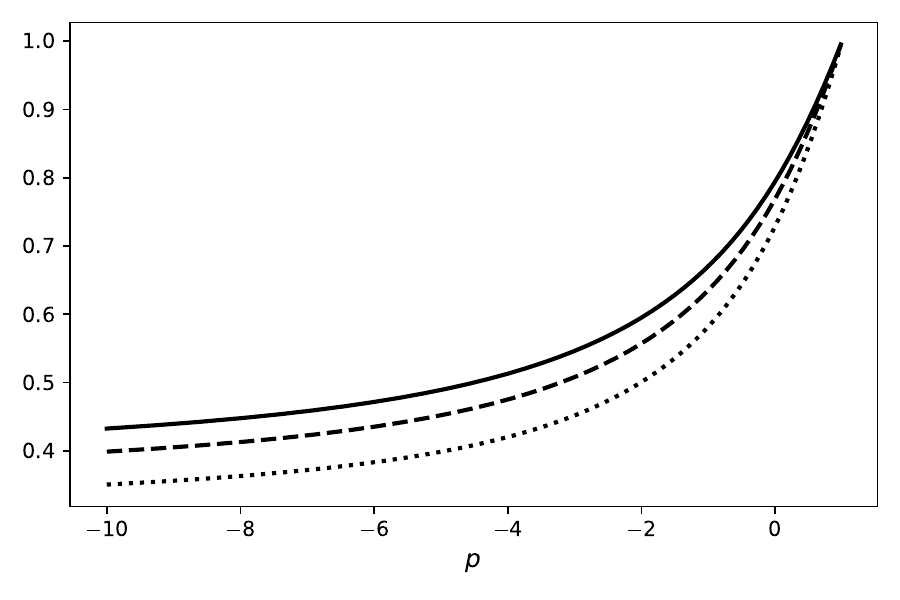}
\caption{\it $\Phi_{p}$-efficiencies of the designs $\mu_{(4,6,6,4,6)}$ (dotted line), $\mu_{(5,8,8,5,8)}$ (dashed line), and $\mu_{(6,10,10,6,10)}$ (solid line). These designs are based on an equidistant sampling of the two spherical coordinates and the three Euler angles and have equal weights of the design points.
The minimal sample size to obtain 
an implementable optimal design  from the approximate
design is therefore $3456$ ($\mu_{(4,6,6,4,6)}$), $12800$ ($\mu_{(5,8,8,5,8)}$)
and $36000$ ($\mu_{(6,10,10,6,10)}$),  respectively. 
For the  given  regression model \eqref{eq:model:padhorny} with $L_{\mathbb S^2}=2$ and $L_{\SO(3)}=1$, there exists a design with equal weights  at $168$ points  that is  $\Phi_{E_s}$- and $\Phi_p$-optimal.}
\label{fig:efficiency_comparison_Phip_Padhorny_Euler_I}
\end{figure}

To be concrete, let us consider the case $L_{\mathbb S^2}=2$ and $L_{\SO(3)}=1$ resulting in a regression model \eqref{eq:model:padhorny} with $90$ parameters. For the Euler angles $(\alpha,\beta,\gamma)$ we use again the design consisting of $12$ points that was constructed in Section~\ref{subsec:example:SO3}.
For the spherical $4$-design on the sphere $\mathbb S^2$ we again resort to a precomputed design consisting of $14$ points (file \texttt{sf004.00014} from \citep{Womersley2015}).

Hence, the resulting optimal product design has $12 \cdot 14 = 168$  points.
In Figure~\ref{fig:efficiency_comparison_Phip_Padhorny_Euler_I} the efficiency curves of three designs based on an equally spaced sampling of the angles $\theta$, $\phi$, $\alpha$, $\beta$, and $\gamma$ are shown.
Here, we again denote the number of sampling points for the respective angles by $n_\theta$, $n_\phi$, $n_\alpha$, $n_\beta$, and $n_\gamma$.

For the vector $(n_\theta,n_\phi,n_\alpha,n_\beta,n_\gamma)$, we consider the three (approximate) designs $\mu_{(4,6,6,4,6)}$, $\mu_{(5,8,8,5,8)}$ and $\mu_{(6,10,10,6,10)}$, and one requires corresponding total sample size $3456$, $12800$ and $36000$ to implement these designs without a rounding procedure.
We again demonstrate the advantage of using the method proposed in this paper by comparison with the uniform design $\mu_{(4,6,6,4,6)}$, which has $\Phi_{-1}$-efficiency $0.582$ and $\Phi_{-\infty}$-efficiency $0.295$. 
This design requires $3456$ observations for its implementation.
In this case, a design taking $12$ observations at each of the $168$ points selected by our method already yields the same estimation accuracy   with a total sample size $n=2016$
(measured with respect to the $\Phi_{-1}$-criterion). For the $\Phi_{-\infty}$-criterion
the difference is even more substantial. Here about $6$ observations at each of the $168$ points selected by our method are sufficient resulting in a design with total sample size $n=1008$, which is a substantial improvement compared to  the $3456$ point design obtained from $\mu_{(4,6,6,4,6)}$.

\begin{acks}[Acknowledgments]
The authors were partially supported by the Deutsche  Forschungsgemeinschaft (DFG) under grant  number 451920280. 
\end{acks}

\bibliographystyle{imsart-nameyear} 
\bibliography{Literature.bib}       

\newpage

\appendix

\section{Lie groups}\label{app:lie}

\subsection{Basic notions}

An $n$-dimensional \emph{topological manifold} is a second countable Hausdorff topological space $M$ that is locally homeomorphic to an open subset of $\R^n$.
An $n$-dimensional \emph{smooth manifold} is a topological manifold $M$ along with a maximal collection of charts, $\{ \varphi\colon U_\alpha \to V_\alpha \}$, such that
\begin{enumerate}
    \item[(1)] $M = \bigcup_{\alpha} U_\alpha$,
    \item[(2)] for every $\alpha,\beta$ with $U_\alpha \cap U_\beta \neq \emptyset$, the \emph{transition map} 
    \begin{equation*}
        \varphi_{\alpha,\beta} = \varphi_\beta \circ \varphi_\alpha^{-1}\colon \varphi_\alpha(U_\alpha \cap U_\beta) \to \varphi_\beta(U_\alpha \cap U_\beta)
    \end{equation*} is a smooth map on $\mathbb R^n$. Here, the notion \emph{smooth} should be understood as a synonym for $\mathcal C^\infty$.
\end{enumerate}

A \emph{Lie group} is a set $G$ which is both a group with respect to an operation $G \times G \to G, (g,h) \mapsto g\cdot h$, and a smooth manifold such that the algebraic structure and the manifold structure are compatible in the sense that both the multiplication map $G \times G \to G, (g,h) \mapsto gh$ and the inversion map $G \to G, g \mapsto g^{-1}$ are smooth.
Real Lie groups are orientable \citep[Theorem~4.34, (1)]{Kirillov2008}.
The \emph{Lie algebra} $\gf$ associated with a Lie group $G$ is defined as the tangent space $T_e G$ to $G$ at the identity element $e \in G$.
The product structure on $\gf$ is given by the 
\emph{Lie bracket} or \emph{commutator} on $\gf \times \gf$ which is a bilinear map $\gf \times \gf \to \gf, (X,Y) \mapsto [X,Y] \in \gf$ that is (1) skew-symmetric (that is, $[X,Y]=-[Y,X]$ for all $X,Y\in\mathfrak g$) and (2) satisfies the Jacobi identity
\begin{equation*}
[X,[Y,Z]] + [Y,[Z,X]] + [Z,[X,Y]] = 0  
\end{equation*} for all $X,Y,Z\in\mathfrak g$.
For any $g \in G$, denote with $C_g$ the map defined by conjugation with $g$,
\begin{equation*}
    C_g\colon G \to G, h \mapsto ghg^{-1}.
\end{equation*}
We denote with $\Ad_g \defeq T_e C_g\colon \gf \to \gf$ the corresponding tangent map given by
\begin{equation*}
    \Ad_g(X) = (C_g)_\ast X.
\end{equation*}
For any $g,h \in G$, $\Ad_g \circ \Ad_h = \Ad_{gh}$ holds, and therefore the map $\Ad \colon G \to \GL(\gf), g \mapsto \Ad_g$ defines a representation of  $G$ (called the \emph{adjoint representation}).
Now take the tangent map of $\Ad$ at the identity element $e \in G$ to obtain the map
\begin{equation*}
    \ad \defeq T_e \Ad \colon \gf \to \End(\gf)
\end{equation*}
where $\End(\gf)$ denotes the set of endomorphisms on $\gf$.
In particular, for any $X$ in $\gf$, one has a linear map $\ad\colon \gf \to \gf$ which is actually given by $\ad_X(Y)=[X,Y]$.
Now, the \emph{Killing form} $\kappa$ is a symmetric bilinear form on $\gf$ defined by
\begin{align*}
    \kappa\colon \gf \times \gf \to \R, \quad (X,Y) \mapsto \tr(\ad_X \circ \ad_Y).
\end{align*}
As usual, $\tr(\phi)$ is defined as the sum of diagonal entries with respect to a matrix representation of the endomorphism $\phi \in \End(\gf)$ (of course, this definition does not depend on the choice of a specific basis).
In general, the Killing form is negative semidefinite.
The 
\emph{semi-simple} Lie algebras are exactly the ones with non-degenerate Killing form (sometimes the non-degeneracy of the Killing form is even used to define the notion of semi-simplicity).
A Lie group is called \emph{semi-simple} if its Lie algebra is semi-simple.
The Lie algebra $\kf$ of a \emph{compact} Lie group $G$, however, is not necessarily semi-simple.
Nevertheless, the complexification $\gf \defeq \kf_\C$ of $\kf$ is reductive meaning that $\gf$ can be written as the direct sum of a semi-simple algebra and a commutative algebra.

A metric on a Lie group $G$ is called bi-invariant if the mappings $L_h\colon G \to G, x \mapsto hx$ and $R_h\colon G \to G, x \mapsto xh$ are isometries for this metric.
It can be shown that any compact Lie group admits a bi-invariant metric (in general, there is a 1:1-correspondence between bi-invariant metrics on $G$ and certain bilinear forms on the Lie algebra $\gf$).
For a semisimple Lie group, 
existence of a bi-invariant metric 
is ensured by the negative definiteness of the Killing form.

A \emph{representation} of a Lie group $G$ on a topological vector space $V$ is a pair $(\pi,V)$, where $\pi\colon G \to \GL(V)$ is a group homomorphism and the map $G \times V \to V,~ (g,v) \mapsto \pi(g)v$ is smooth.  

A \emph{representation} of a Lie algebra $\gf$ is a pair $(\pi,V)$, where $V$ is a finite dimensional vector space and $\pi\colon \gf \to \End(\gf)$ is a Lie algebra homomorphism.
Note that no extra continuity assumption is needed here because Lie algebras, being tangent space to finite dimensional smooth manifolds, are finite dimensional vector spaces, and linear maps between such vector spaces 
are automatically (bounded hence) continuous.

Although our interest is in real Lie groups and their corresponding Lie algebras, we will consider complex representations.
Since some of the results we refer to are stated for complex representations of complex Lie algebras only, we emphasize that by Lemma~4.4 in~\citep{Kirillov2008} the categories of \emph{complex} representations of $\gf$ and its complexification $\gf_\C$ are equivalent and results for both categories are in one-to-one correspondence.
Moreover, the complexification of a semi-simple Lie algebra is again semi-simple.

\subsection{Some details for the proof of Theorem~\ref{thm:lambda:Phi}}\label{app:proof:details}

The proof of our main result in Theorem~\ref{thm:lambda:Phi} 
relies on the 
theorem of the highest weight for connected and compact (matrix) Lie groups (see \citet[Chapter 12]{Hall2015Lie} for details).
For a connected and compact Lie group $G$ (with fixed maximal torus $T$) the irreducible representations of $G$ are in one-to-one correspondence with the \emph{highest weights}  which are elements of the
corresponding Lie algebra $\tf$ of $T$.
Essential for the proof of Theorem~\ref{thm:lambda:Phi} is a relation between the eigenvalues of the Laplace operator on $G$ and the highest weights of the irreducible representations.
More precisely, it can be shown that the eigenspaces of the Laplace-Beltrami operator on a compact Lie group are in one-to-one correspondance with the (finite-dimensional) irreducible representations of $G$ 
\citep[Proposition~10.2]{Bump2013}, and this equivalence can be made precise via the 
\emph{Casimir element} of the universal enveloping algebra.

The highest weight $\mu$ associated with a representation corresponds to the eigenvalue $\lambda_\mu = \lVert \mu + \rho \rVert^2 - \lVert \rho \rVert^2$ where $\rho$ denotes half the sum of the positive roots (in this context, roots are to be understood with respect to a fixed maximal torus $T$ of the Lie group resp. its corresponding Lie algebra $\tf$).
The norm $\lVert \cdot \rVert$ used here is the 
\emph{Killing norm}, that is, the norm induced by the Killing form.

For the proof of Theorem~\ref{thm:lambda:Phi} it is important to understand the behaviour of weights under tensor products.
Let $V_\mu$ resp. $V_\nu$ be two irreducible representations of $G$ with highest weights $\mu$ resp. $\nu$.
Then, the tensor product $V_\mu \otimes V_\nu$ decomposes as the direct sum of finitely many irreducible representations,
\begin{equation*}
    V_\mu \otimes V_\nu = V_{\omega_1} \oplus \ldots \oplus V_{\omega_p},
\end{equation*}
and it can be shown that $\omega_1,\ldots,\omega_p \leq \mu + \nu$.
In particular,
\begin{equation}\label{eq:weights:tensor_product}
    V_\mu \otimes V_\nu \subseteq \bigoplus_{\omega \leq \mu + \nu} V_\omega.
\end{equation}
Restating these results in terms of eigenvalues yields that the eigenvalue $\lambda_{\mu + \nu}$ associated with the sum $\mu + \nu$ (which is the highest weight appearing on the right-hand side of \eqref{eq:weights:tensor_product}) satisfies
\begin{align*}
    \lambda_{\mu + \nu} &= \lVert \mu + \nu + \rho \rVert^2 - \lVert \rho \rVert^2\\[2mm]
    &= \left \langle \mu + \nu + 2 \rho, \mu + \nu \right \rangle\\[2mm]
    &= \left \langle \mu + 2\rho,\mu \right \rangle + \left \langle \nu + 2 \rho, \nu  \right \rangle + \left \langle \mu,\nu \right \rangle +\left \langle \nu,\mu \right \rangle\\[2mm]
    &\leq \lambda_\nu + \lambda_\mu + 2 \cdot \lVert \mu \rVert \cdot \lVert \nu \rVert\\[2mm]
    &= \lambda_\nu + \lambda_\mu + 2 \Big( \lVert \rho \rVert + \sqrt{\lambda_\mu + \lVert \rho \rVert^2} \Big ) \Big ( \lVert \rho \rVert + \sqrt{\lambda_\nu + \lVert \rho \rVert^2} \Big ),
\end{align*}
and the right-hand side can be bounded from above by \begin{equation*}
    (2+2(1+\sqrt{2})^2)\lambda = (8+4\sqrt 2)\lambda < 14\lambda\,,
\end{equation*} provided that $\lambda_\mu, \lambda_\nu \leq \lambda$ and $\lambda \geq \lVert \rho \rVert \vee 1$.
Consequently, for two eigenfunctions $\phi_{\nu}$ and $\phi_\mu$ associated with eigenvalues $\lambda_\nu \leq \lambda$ and $\lambda_\mu \leq \lambda$, respectively, the product $\phi_\nu\phi_\mu^\ast$ satisfies
\begin{equation*}
    \phi_{\nu} \phi_\mu^\ast \in \bigoplus_{\kappa \leq 14\lambda} E_\kappa.
\end{equation*}
More precisely, if $\phi_\nu \neq \phi_\mu$, then
\begin{equation}\label{eq:product:nu:neq:mu}
    \phi_{\nu} \phi_\mu^\ast \in \bigoplus_{0 < \kappa \leq 14\lambda} E_\kappa
\end{equation}
by orthogonality of the basis functions and using the fact that for a compact and connected Riemannian manifold the eigenspace associated with the eigenvalue $\lambda_0=0$ is the one-dimensional space containing the constant functions on $G$.
On the other hand, if $\phi_\nu = \phi_\mu$, then the product $\phi_\nu \phi_\nu^\ast = \lvert \phi_\nu \rvert^2$ can be written in the form $\phi_\nu \phi_\nu^\ast = 1 + \phi$ for some
\begin{equation}\label{eq:product:nu:eq:mu}
    \phi \in \bigoplus_{0 < \kappa \leq 14\lambda} E_\kappa.
\end{equation}
\end{document}